\title{\vskip-1.0em\sc Weak amenability for Fourier algebras of $1$-connected nilpotent Lie groups}
\author{\sc Y.Choi and M. Ghandehari}
\date{1st March 2015}
\newcommand{\dt}[1]{\textcolor{Bittersweet}{\sf#1}}
\newcommand{\para}[1]{\paragraph{#1.}}  
\begin{document}

\maketitle

\begin{abstract}
A special case of a conjecture raised by Forrest and Runde (Math.\ Zeit., 2005) asserts that the Fourier algebra of every non-abelian connected Lie group fails to be weakly amenable; this was already known to hold in the non-abelian compact cases, by earlier work of Johnson (JLMS, 1994) and Plymen (unpublished note).
In recent work (JFA, 2014) the present authors verified this conjecture for the real $ax+b$ group and hence, by structure theory, for any semisimple Lie group.

In this paper we verify the conjecture for all $1$-connected, non-abelian nilpotent Lie groups, by reducing the problem to the case of the Heisenberg group.
As in our previous paper, an explicit non-zero derivation is constructed on a dense subalgebra, and then shown to be bounded using harmonic analysis. {\it En route}\/ we use the known fusion rules for Schr\"odinger representations to give a concrete realization of the ``dual convolution'' for this group as a kind of twisted, operator-valued convolution. We also give some partial results for solvable groups which give further evidence to support the general conjecture.


\medskip
\noindent
MSC 2010: Primary 43A30; Secondary 46J10, 47B47.
\end{abstract}


\begin{section}{Introduction}
Fourier algebras of locally compact groups comprise an interesting class of Banach function algebras whose detailed structure remains somewhat mysterious, especially for groups which are neither compact nor abelian. 
It was observed by B.~E. Forrest \cite{Forr_PAMS88} that these algebras have no non-zero continuous point derivations. Nevertheless, as part of his seminal paper \cite{BEJ_AG}, B.~E. Johnson constructed a continuous non-zero derivation from the Fourier algebra of $\SO(3)$ into a suitable Banach bimodule: in the language of \cite{BCD_WA}, he proved that the Fourier algebra of $\SO(3)$ is not \dt{weakly amenable}.
This can be interpreted as evidence for some kind of weak form of differentiability or H\"older continuity for functions in the algebra.

Sufficient conditions for weak amenability were obtained in \cite{ForRun_amenAG}: if $G$ is locally compact and the connected component of its identity element is abelian, then its Fourier algebra $\FA(G)$ is weakly amenable. 
Motivated by Johnson's result, the authors of \cite{ForRun_amenAG} conjectured that this sufficient condition for weak amenability of $\FA(G)$ is necessary.
In particular, their conjecture implies that the Fourier algebra of any non-abelian connected Lie group is not weakly amenable; this was known at the time for compact Lie groups, but unknown for several natural examples including $\SL(2,\Real)$ and all the nilpotent cases.

This paper is a sequel to \cite{CG_WAAG1}, which studied weak and cyclic amenability for Fourier algebras of certain connected Lie groups, and whose introduction contains further information on the history and context of the results mentioned above.
In that paper we showed that the Fourier algebra of any connected, semisimple Lie group fails to be weakly amenable.
The key to this result was to show that the Fourier algebra of the real $ax+b$ group is not weakly amenable, and this in turn was done by 
constructing an explicit non-zero derivation from the Fourier algebra to its dual.
The derivation constructed in \cite{CG_WAAG1} is easily defined on a dense subalgebra, but showing that it extends continuously to the whole Fourier algebra required careful estimates provided by explicit orthogonality relations for certain coefficient functions of the real $ax+b$ group.
We also proved, using similar techniques, that the Fourier algebra of the reduced Heisenberg group $\bbH_r$ is not weakly amenable.
However, our methods were not able to handle the Fourier algebra of the ``full'' $3$-dimensional Heisenberg group~$\bbH$, which is a key example to consider when seeking to prove or refute the conjecture of Forrest and Runde. 

In the present paper we develop techniques which allow us to fill this gap. The outcome is the following new result.

\begin{thm}\label{t:mainthm}
There exist a symmetric Banach bimodule $\sW$ and a bounded, non-zero derivation $D:\FAH\to \sW$. Consequently, $\FAH$ is not weakly amenable.
\end{thm}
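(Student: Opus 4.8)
The plan is to realize $\FAH$ concretely through Plancherel theory and then build the derivation so that it exploits the non-commutativity hidden in the pointwise product of functions. Since the Plancherel measure of $\bbH$ is carried by the one-parameter family of Schr\"odinger representations $\pi_\lambda$ (for $\lambda \in \Real\setminus\{0\}$) acting on $L^2(\Real)$, with weight $\abs{\lambda}\,d\lambda$, I would first identify $\FAH$ isometrically with a weighted $L^1$-space of trace-class operator fields, $\FAH \iso L^1(\Real\setminus\{0\}, \STN_1; \abs{\lambda}\,d\lambda)$, an element $a$ corresponding to the field $\lambda \mapsto a(\lambda) \in \STN_1(L^2(\Real))$. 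Under this identification the pointwise product of functions on $\bbH$ becomes a ``dual convolution'' of operator fields, and the first substantial task is to compute it explicitly. Using the fusion rule for Schr\"odinger representations---the tensor product $\pi_\lambda \tp \pi_\mu$ decomposes as a multiple of the single representation $\pi_{\lambda+\mu}$---I expect to arrive at a formula expressing the product as a twisted, operator-valued convolution in the parameter $\lambda$, the twist being implemented by the intertwiners that realize the fusion rule and the support condition being essentially $\lambda+\mu=\nu$.

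With the convolution formula in hand, I would define a candidate derivation $D$ on a dense subalgebra (say the fields that are smooth and compactly supported in $\lambda$ with values in a tractable class of operators). The natural ansatz is a differentiation in the Plancherel variable, decorated by the operators arising from the twist and chosen so that the Leibniz rule $D(ab) = a \cdot D(b) + D(a) \cdot b$ drops out of the product rule applied to the convolution in $\nu$. Verifying Leibniz is then a formal computation; the essential point is that the target be a symmetric $\FAH$-bimodule $\sW$, which I would construct from bounded operator fields as an appropriate completion related to $\VN(\bbH) = \FAH^*$, arranged so that the left and right module actions coincide.

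The main obstacle, exactly as in the $ax+b$ and reduced Heisenberg cases of the prequel, is proving that $D$ is \emph{bounded} from $\FAH$ into $\sW$. Here the difficulty is genuinely greater than for $\bbH_r$: the reduced group has compact centre, so its Plancherel parameter runs over $\Zahl\setminus\{0\}$ and the dual object is a direct sum, whereas for $\bbH$ the parameter runs over the continuum $\Real\setminus\{0\}$ and one must control a direct integral. I would estimate $\norm{D(a)}_{\sW}$ by pairing against a general element of the predual and reducing, via the explicit convolution, to integrals of matrix coefficients of the $\pi_\lambda$; these coefficients are governed by Hermite/Laguerre special functions, and the crucial input is a set of orthogonality relations for them. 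The technical heart of the argument is to combine these relations with the $\abs{\lambda}\,d\lambda$ weight so as to bound the relevant operator-valued kernels uniformly. Finally, to see that $D \neq 0$ it suffices to evaluate it on one explicit, well-chosen rank-one field and check that the output is nonzero; weak amenability then fails because a commutative Banach algebra admitting any nonzero bounded derivation into a symmetric bimodule cannot be weakly amenable, as in \cite{BCD_WA}.
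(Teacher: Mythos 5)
Your setup is the right one---identifying $\FAH$ with $L^1$ of trace-class operator fields over the Plancherel parameter and computing the pointwise product as a twisted operator-valued convolution via the fusion rule $\pi_\lambda\tp\pi_\mu\simeq\pi_{\lambda+\mu}\tp I$ is exactly how the argument begins. But your ansatz for the derivation is wrong. Differentiation in the Plancherel variable does not satisfy the Leibniz rule for a convolution supported on $\lambda+\mu=\nu$: for an (even untwisted) convolution one has $\partial_\nu(F\opconv G)=(\partial F)\opconv G=F\opconv(\partial G)$, so the Leibniz identity would force $\partial_\nu(F\opconv G)=0$; the twist only adds further derivative terms hitting the intertwiners. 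The operation that \emph{is} a derivation for such a convolution is multiplication by the parameter $\nu$, precisely because $\nu=\lambda+\mu$ splits additively across the two factors. On the group side this multiplier is (a normalization of) $\partial/\partial z$ in the central variable, since $\pi_t$ restricted to the centre is the character $e^{2\pi i tz}$; this is the derivation actually used.

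The second problem is your boundedness strategy. Pairing against the predual and invoking orthogonality relations for Hermite/Laguerre matrix coefficients is the method that works for the \emph{reduced} Heisenberg group, whose infinite-dimensional irreducibles are square-integrable. For the full group $\bbH$ the centre is non-compact, the coefficient functions of $\pi_\lambda$ are not square-integrable, and no such orthogonality relations are available; this is exactly the obstruction that forces a different argument here. In fact the difficulty sits in a different place than you expect: if one takes $\sW$ to be (the completion of) the integrable functions $h$ with $\wnorm{h}=\int_\Real\opnorm{\pi_t(h)}\,dt<\infty$, then boundedness of $D$ is immediate from $\opnorm{t\,\pi_t(f)}\le |t|\,\norm{\pi_t(f)}_1$ together with the Plancherel identification of $\norm{f}_{\FAH}$. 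What is genuinely hard---and what your proposal treats as routine---is proving that this $\sW$ is a Banach $\FAH$-module for pointwise multiplication, i.e.\ that $\wnorm{fh}\le\norm{f}_{\FAH}\wnorm{h}$. That estimate is where the explicit twisted convolution earns its keep: one first proves the dual inequality, that the norm $\sup_t|t|\,\norm{\pi_t(\cdot)}_1$ is an $\FAH$-module norm, by reading it off from the convolution formula and standard Bochner-integral estimates, and then transfers it to $\wnorm{\cdot}$ by a duality argument. Without addressing this module property, the claim that $\sW$ is a symmetric Banach $\FAH$-bimodule is unsupported, and the application of the Bade--Curtis--Dales criterion does not go through.
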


 This result then opens the way, via structure theory of Lie groups and Herz's restriction theorem for Fourier algebras, to the following more general statement.

\begin{thm}\label{t:nilpotent cases}
Let $G$ be a $1$-connected Lie group. If $G$ is also nilpotent and non-abelian, then $\FA(G)$ is not weakly amenable.
\end{thm}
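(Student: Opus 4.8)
The plan is to deduce Theorem~\ref{t:nilpotent cases} from Theorem~\ref{t:mainthm} by a soft reduction: use structure theory to locate a copy of $\bbH$ inside $G$, and then transport non-weak-amenability along Herz's restriction theorem. The two ingredients I would isolate are: (a)~every non-abelian nilpotent Lie algebra contains a subalgebra isomorphic to the three-dimensional Heisenberg algebra $\fh$; and (b)~weak amenability of a commutative Banach algebra is inherited by its quotients by closed ideals.

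For (a), let $\fg$ be the (real) Lie algebra of $G$ and consider the upper central series $0=\lie{z}_0\subset\lie{z}_1=Z(\fg)\subset\lie{z}_2\subset\cdots$. Since $G$ is non-abelian, $\lie{z}_1\neq\fg$, and since $\fg$ is nilpotent the quotient $\fg/\lie{z}_1$ is a non-trivial nilpotent algebra, hence has non-trivial centre; so $\lie{z}_2\supsetneq\lie{z}_1$. Choosing $B\in\lie{z}_2\setminus\lie{z}_1$, the element $B$ is not central, so there is $A\in\fg$ with $C:=[A,B]\neq 0$; moreover $C\in[\fg,\lie{z}_2]\subseteq\lie{z}_1$, so $C$ is central and $[A,C]=[B,C]=0$. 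A direct check shows $A,B,C$ are linearly independent (were $C\in\lin\{A,B\}$, centrality of $C$ together with $[A,B]=C$ would force $C=0$), so $\lin\{A,B,C\}$ is a subalgebra isomorphic to $\fh$. Because $G$ is $1$-connected and nilpotent, the exponential map is a global diffeomorphism and every Lie subalgebra integrates to a closed, simply connected subgroup; applied to $\fh$ this yields a closed subgroup $H\leq G$ with $H\cong\bbH$. It is exactly here that $1$-connectedness is used, and crucially it delivers the \emph{full} Heisenberg group rather than a quotient such as $\bbH_r$, which is why Theorem~\ref{t:mainthm} (and not merely the reduced-Heisenberg result of \cite{CG_WAAG1}) is the statement needed.

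For (b), I would invoke Gr\o{}nb\ae{}k's characterization that a commutative Banach algebra is weakly amenable precisely when it admits no non-zero continuous derivation into a symmetric Banach bimodule. Given a closed ideal $I$ of such an algebra $A$ and a derivation $D$ from $A/I$ into a symmetric module $Y$, pulling $Y$ back along the quotient map $q\colon A\to A/I$ makes it a symmetric $A$-module and $D\circ q$ a derivation, which must vanish, forcing $D=0$; thus $A/I$ is again weakly amenable. With this in hand the argument closes quickly: by Herz's restriction theorem the restriction map $\FA(G)\to\FA(H)$ is a surjective algebra homomorphism, so $\FA(H)$ is a quotient of $\FA(G)$; were $\FA(G)$ weakly amenable, so too would be $\FA(H)\cong\FAH$, contradicting Theorem~\ref{t:mainthm}. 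Hence $\FA(G)$ is not weakly amenable.

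The mathematical heart of the matter is Theorem~\ref{t:mainthm}; the reduction above is essentially formal, so I do not anticipate a serious obstacle. The only points demanding care are verifying that the Heisenberg subalgebra integrates to a \emph{closed} subgroup isomorphic to the full $\bbH$ (a consequence of simple connectivity and the Mal'cev theory of nilpotent Lie groups) and confirming that Herz's theorem yields a genuine quotient of Banach algebras, so that the hereditary property in (b) applies.
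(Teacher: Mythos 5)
Your proposal is correct and follows essentially the same route as the paper: locate a copy of the Heisenberg algebra in $\fg$, integrate it via the exponential diffeomorphism to a closed subgroup isomorphic to $\bbH$, and transfer the failure of weak amenability through Herz's restriction theorem and the hereditary property for dense-range homomorphisms. The only (harmless) differences are that you use the upper central series where the paper uses the last non-trivial terms of the lower central series, and you phrase the hereditary step via quotients by closed ideals rather than the paper's slightly more general pullback along a homomorphism with dense range.
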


(In the present context, $1$-connected is a synonym for ``connected and simply connected''; we are following the terminology of \cite{HN_LieBook}.)

\subsection*{Outline of our approach}
As in \cite{CG_WAAG1}, we construct an explicit non-zero derivation on a dense subalgebra, and then use harmonic analysis to show this derivation has a bounded extension to~$\FAH$.
 We follow the same informal guiding principle as before: use the Fourier transform to convert a claim about a differential operator to one about some kind of Fourier multiplier.
However, since $\bbH$ is far from being an AR group, we cannot use a decomposition of its Fourier algebra into coefficient spaces of square-integrable representations. (Contrast this with the ideas sketched in \cite[Section~7]{CG_WAAG1} for the group $\bbH_r$.)
We are therefore forced to use a different perspective: instead of orthogonality relations for coefficient functions, we use a version of the Plancherel formula for~$\bbH$.

We then encounter another obstacle not present in our previous paper.
In \cite{CG_WAAG1}, the derivation constructed for the $ax+b$ group mapped the Fourier algebra to its dual. We are unable to do the same for $\bbH$, but instead construct a derivation taking values in a Banach space $\sW$ that is constructed artificially for our purposes. The way we define $\sW$ makes it easy to show our derivation extends to a continuous linear map $\FAH\to \sW$, but the work lies in showing that $\sW$ is a genuine Banach $\FAH$-bimodule for the natural pointwise product.

In fact, to prove that the norm $\wnorm{\cdot}$ is an $\FAH$-module norm, we study the dual norm $\mnorm{\cdot}$ and prove that \emph{this} norm is an $\FAH$-module norm. This may seem unmotivated, but looking at the arguments of \cite{CG_WAAG1} for the \emph{reduced} Heisenberg group $\bbH_r$\/, one sees that there the problem is solved by
 establishing an estimate
\[ \left\vert \int_{\bbH_r} (\dd_Z f)(\bx)g(\bx)\,d\bx \right\vert \leq \norm{f}_{\FA(\bbH_r)} \norm{g}_{\FA(\bbH_r)} \]
where $\dd_Z$ is a certain normalized partial derivative. It is then not such a leap to look for an estimate of the form 
\[ \left\vert \int_{\bbH} (\dd_Z f)(\bx)g(\bx)\,d\bx \right\vert \leq \norm{f}_{\FA(\bbH)} \mnorm{g} \,, \]
provided we can show $\mnorm{\cdot}$ is an $\FAH$-module norm.

How do we prove $\mnorm{\cdot}$ is an $\FAH$-module norm? It turns out that this can be done very easily if we use the Fourier transform to move everything over to the ``Fourier side'', identifying $\FAH$ with a vector-valued $L^1$-space. We then need to study the product on this vector-valued $L^1$-space corresponding to pointwise product on $\FAH$;
this can be expressed as an explicit ``twisted operator-valued convolution'', and then the required inequalities for $\mnorm{\cdot}$ follow from standard properties of the Bochner integral.
The details of this operator-valued convolution (which is a concrete version of a general construction studied in e.g.~\cite[\S9]{Sti_TAMS59}) are given in Section~\ref{s:dualconv}.

 The special feature of $\bbH$ which makes this work is that the  fusion rules for the infinite-dimensional irreducible representations of $\bbH$ behave very nicely, so that the ``convolution'' has a very tractable form.
 We hope that these results may be of independent interest: a theme throughout this paper, also implicit in \cite{CG_WAAG1}, is that the group side is better for checking algebraic properties, such as the derivation identity and associativity of module actions, while the Fourier side is better for verifying norm estimates and approximating by well-behaved elements.

Finally, in Section \ref{s:moreLie} we return to Fourier algebras of more general Lie groups, and show how Theorem~\ref{t:nilpotent cases} follows from Theorem~\ref{t:mainthm}. We close with some further partial results and questions for Fourier algebras of solvable Lie groups.

\para{Note added in proof}
After this paper was submitted for publication, we learned of the interesting work of Lee--Ludwig--Samei--Spronk, \texttt{arXiv 1502.05214}, which proves the Lie case of the Forrest--Runde conjecture using a different perspective. In particular, the case of the motion group ${\rm Euc}(2)$, left open here, is resolved by these authors.

\subsection*{Acknowledgments}
This work was initiated while both authors worked at the University of Saskatchewan, where the first author was partially supported by NSERC Discovery Grant 4021530-2011 (Canada). The project was completed, and the first version of this paper written,  while both authors were visiting the Fields Institute for Mathematical Research, Toronto, as part of a thematic program on {\it Banach and Operator Algebras in Harmonic Analysis}, January--June 2014. We~thank the organizers of the program and the relevant concentration periods for the invitation to participate, and we thank the Fields Institute for their hospitality and support. The first author also thanks the Faculty of Science and Technology at Lancaster University, England, for fin\-ancial support to attend the thematic program.
The second author was supported by a Fields Postdoctoral Fellowship affiliated to the thematic program.

Important revisions were made, and some extra material added, while the second author was visiting Lancaster University under the support of a \emph{Scheme~2} grant from the London Mathematical Society. She thanks the Society for their support and the Department of Mathematics and Statistics at Lancaster University for its hospitality.

Finally, both authors wish to give special thanks to the referee of this article, for an attentive reading of the original submission, and for several precise and helpful suggestions which have improved the presentation of the original arguments. In particular we are grateful for the suggestion to look at \cite[\S9]{Sti_TAMS59} when discussing abstract Fourier inversion for unimodular groups, and for suggesting a simpler and more direct definition of the module~$\sW$ and the derivation $D:\FAH\to\sW$ in Theorem~\ref{t:W-is-a-module-norm}.
\end{section}

\begin{section}{Definitions and technical preliminaries}
\label{s:prelim}

To make the paper more self-contained, and to fix notation and establish terminology, we use this section to collect various definitions and statements from the literature.
We thus hope to make the present paper more accessible to workers in the general area of Banach algebras, who may be less familiar with some of these technical preliminaries than specialists in harmonic analysis on Lie groups.
 The experienced reader may wish to skip this long preliminary section and go straight to Section~\ref{s:Heisenberg}; there, we specialize to the Heisenberg group, and present the Plancherel transform and Fourier inversion formula for this group in the form that we will need later.

\begin{subsection}{Notation and key definitions}

\begin{notn}[Banach spaces]
Throughout $\ptp$ will denote the projective tensor product of Banach spaces.
All Banach spaces are defined over complex scalars; if $E$ is a Banach space then $\overline{E}$ will denote the complex conjugate of $E$. It should be clear from context how to distinguish this from the usual notation for the closure of a set.

Given $p\in [1,\infty)$ and a Hilbert space $\cH$, we let $\cS_p(\cH)$ denote the space of \dt{$p$-Schatten class operators} on~$\cH$, and denote the corresponding $p$-Schatten norm by~$\norm{A}_p$. The operator norm on $\Bdd(\cH)$ will be denoted by $\norm{A}_\infty$.
\end{notn}

The following notation is standard and may be found in \cite{eymard64}, for instance.
\begin{dfn}[The ``check map'']
Let $G$ be a group and let $f:G\to\Cplx$ be an arbitrary function.
We denote by $\check{f}$ the function $g\mapsto f(g^{-1})$.
\end{dfn}

Given a Banach algebra $A$, a Banach $A$-bimodule $M$ is \dt{symmetric} if $a\cdot m = m\cdot a$ for all $a\in A$ and $m\in M$. A bounded linear map $D:A\to M$ is said to be a \dt{(continuous) derivation} if it satisfies the Leibniz identity $D(ab)=a\cdot D(b) + D(a)\cdot b$ for all $a,b\in A$.

The following definition is due to Bade, Curtis and Dales~\cite{BCD_WA}.
Let $A$ be a commutative Banach algebra.
 We say that $A$ is \dt{weakly amenable} if there is no non-zero, continuous derivation from $A$ to any symmetric Banach $A$-bimodule.

\begin{rem}\label{derivations on quotients}
As observed in \cite{BCD_WA}: if $A, B$ are commutative Banach algebras and $\theta:A\to B$ is a continuous homomorphism with dense range, then continuous, non-zero derivations on $B$ can be pulled back along $\theta$ to give continuous, \emph{non-zero} derivations on~$A$. Consequently, if $B$ fails to be weakly amenable, $A$ also fails to be weakly amenable.
\end{rem}

There are several different ways to define the Fourier algebra of a locally compact group, each with their own pros and cons. The following definition is not the original one, but is equivalent to it by the results of
\cite[Chapitre~3]{eymard64}.
Let $\lambda$ denote the left regular representation of $G$ on $L^2(G)$. Given $\xi,\eta\in L^2(G)$ we form the corresponding \dt{coefficient function} of $\lambda$\/, denoted by $\xi*_\lambda\eta$ and defined by
\[ (\xi*_\lambda\eta)(g) \defeq \pair{\lambda(g)\xi}{\eta} = \int_G \xi(g^{-1}s)\overline{\eta(s)}\,ds \,.\]
The map $\xi\tp\overline{\eta} \to \xi*_\lambda \eta$ defines a bounded linear map $\theta_\lambda: L^2(G)\ptp \overline{L^2(G)}\to C_0(G)$, and its range, equipped with the quotient norm of $L^2(G)\ptp \overline{L^2(G)}/\ker(\theta_\lambda)$, is denoted by $\FA(G)$. It follows from Fell's absorption theorem that $\FA(G)$ is closed under pointwise product and the norm on $\FA(G)$ is submultiplicative.
(See e.g.~\cite[\S4.1]{Zwarich_MSc} for a quick exposition of these results.)
 Thus $\FA(G)$ is a Banach algebra of functions on $G$, called the \dt{Fourier algebra} of $G$.
In fact, every element of $\FA(G)$ can be realized as a coefficient function of $\lambda$, and we have
\begin{equation}\label{eq:A(G)-as-A_lambda}
\norm{u}_{\FA(G)}=\inf\{\norm{\xi}_2\norm{\eta}_2 \st u=\xi*_\lambda\eta\}.
\end{equation}

If $H$ is a closed subgroup of $G$ let $\imath^*:C_0(G)\to C_0(H)$ be the restriction homomorphism. 
One can show that $\imath^*$ maps $\FA(G)$ contractively onto $\FA(H)$: this is originally due to C.~Herz, but an approach using spaces of coefficient functions was given by G. Arsac~\cite{Arsac}. (A fairly self-contained account of Arsac's approach can be found in \cite[\S4]{Zwarich_MSc}.)
Therefore, recalling Remark~\ref{derivations on quotients}, we obtain the following well-known result.

\begin{prop}\label{p:WA-hered}
Let $G$ be a locally compact group and $H$ a closed subgroup. If $\FA(H)$ is not weakly amenable, then $\FA(G)$ is not weakly amenable.
\end{prop}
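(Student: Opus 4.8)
The plan is to deduce this immediately from the two ingredients just assembled: the restriction homomorphism $\imath^*:\FA(G)\to\FA(H)$ and the pullback principle of Remark~\ref{derivations on quotients}. Essentially no new argument is needed; the task is to check that the hypotheses of that remark are met and then to assemble the pieces.

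First I would record that $\FA(G)$ and $\FA(H)$ are both commutative Banach algebras (they are algebras of functions under pointwise product), so that the notion of weak amenability applies to each, and that by Herz's restriction theorem—in the coefficient-function formulation of Arsac described above—the map $\imath^*$ is a contractive algebra homomorphism carrying $\FA(G)$ \emph{onto} $\FA(H)$. In particular $\imath^*$ is a continuous homomorphism with dense range, which is exactly what is required to apply Remark~\ref{derivations on quotients}.

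Then I would invoke that remark with $A=\FA(G)$, $B=\FA(H)$ and $\theta=\imath^*$. Concretely: starting from a non-zero continuous derivation $D:\FA(H)\to M$ into a symmetric Banach $\FA(H)$-bimodule $M$ (which exists precisely because $\FA(H)$ is not weakly amenable), one regards $M$ as a symmetric $\FA(G)$-bimodule via $a\cdot m:=\imath^*(a)\cdot m$, and checks that $D\circ\imath^*:\FA(G)\to M$ is again a continuous derivation. The Leibniz identity for $D\circ\imath^*$ is immediate from the fact that $\imath^*$ is an algebra homomorphism and $D$ satisfies Leibniz, while continuity is clear since both $\imath^*$ and $D$ are bounded. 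This produces a non-zero continuous derivation on $\FA(G)$, so $\FA(G)$ is not weakly amenable.

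The only point requiring a moment's care—and the step I would flag as the crux of the pullback—is verifying that $D\circ\imath^*$ is genuinely non-zero. Here one uses that $\imath^*$ has dense range together with the continuity of $D$: were $D\circ\imath^*$ to vanish identically, then $D$ would vanish on a dense subspace of $\FA(H)$ and hence, by continuity, on all of $\FA(H)$, contradicting $D\neq 0$. Since all the genuinely substantial content—Herz's restriction theorem and the Bade--Curtis--Dales pullback lemma—has already been established or cited, the proof itself is a short assembly of these facts rather than a new argument, consistent with its billing as a well-known result.
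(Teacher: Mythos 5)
Your proposal is correct and follows exactly the route the paper takes: Herz's restriction theorem (in Arsac's coefficient-function formulation) gives a continuous homomorphism $\imath^*:\FA(G)\to\FA(H)$ with dense range, and Remark~\ref{derivations on quotients} then pulls a non-zero continuous derivation on $\FA(H)$ back to one on $\FA(G)$. The additional details you supply --- the module structure via $\imath^*$ and the density argument showing $D\circ\imath^*\neq 0$ --- are precisely the content the paper leaves implicit in that remark.
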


\end{subsection}

\begin{subsection}{The Plancherel and inverse Fourier transforms for Type I unimodular groups}\label{ss:general Plancherel}
The Plancherel and (inverse) Fourier transform for the Heisenberg group will be important tools in our calculations.
These mappings can be defined in much more general settings: ``global versions'', valid for any locally compact unimodular group, can be found in work of Stinespring \cite[\S9]{Sti_TAMS59}.
However, for the key work in this paper (Section~\ref{s:dualconv}) it seems important to use concrete knowledge of the unitary dual and Plancherel measure for the Heisenberg group.

\begin{rem}[Minor caveat]\label{r:stinespring-caveat}
In more modern language, \cite[\S9]{Sti_TAMS59} works with noncommutative $L^p$-spaces of the pair $(\VN(G),\tau)$, where $\tau$ is the Plancherel weight for $\VN(G)$; these are, strictly speaking, certain spaces of $\tau$-measurable operators on $L^2(G)$. Later in this section, when we use results from Stinespring's paper to justify certain assertions, we are tacitly inserting an extra step: namely, one has to disintegrate $\VN(G)$ as a direct integral over $\widehat{G}$, and then observe that one can identify $L^p(\VN(G),\tau)$ with the corresponding space of $p$-integrable, $p$-Schatten class-valued operator fields over $\widehat{G}$.
\end{rem}

In future work, we intend to study similar problems concerning derivations for the Fourier algebras of some other Type~I groups, and it seems useful to collect some machinery here that is applicable to these other cases and not just to the Heisenberg group. Thus, for this subsection $G$ will be a second countable unimodular Type~I group.
We follow the terminology and definitions used in~\cite{Fuehr_LNM}. 

For such $G$, the canonical ``Mackey Borel structure'' on $\widehat{G}$ makes it into a standard measure space. Moreover, from each equivalence class of irreducible unitary representations, one can select a representative, in a way that gives a measurable field $(\cH_\pi)_{\pi\in\widehat{G}}$ and a corresponding measurable field of representations.
(See \cite[\S7.4]{Foll_AHAbook} for the basic definitions and properties of direct integrals and measurable fields, in particular Lemma 7.39 and Theorem 7.40 for the relevance of the Type~I condition.)

Using the notation of \cite{Fuehr_LNM}, Chapters 3 and~4: given a measure $\nu$ on $\widehat{G}$ and $1\leq p<\infty$, we write $\cB^\oplus_p(\widehat{G},\nu)$ for the space of all measurable fields $(T_\omega)$ which satisfy $T_\omega\in \cS_p(\cH_\omega)$ for $\nu$-a.e.~$\omega$ and
\[ \int_{\widehat{G}} \norm{T_\omega}_p^p \,d\nu(\omega) < \infty\,. \]
Once we make the usual identifications modulo $\nu$-a.e.\ equivalence,
$\cB^\oplus_p(\widehat{G},\nu)$ is a Banach space when equipped with the obvious norm $\norm{\cdot}_p$.


\para{Fact} (See \cite[Theorem 7.44]{Foll_AHAbook} or \cite[Theorem 3.31]{Fuehr_LNM}.)
There exists a measure $\nu$ on $\widehat{G}$, called the \dt{Plancherel measure of $G$}, such that the linear map
\[  \cP: f\mapsto (\pi(f))_{\pi\in\widehat{G}} \qquad(f\in (L^1\cap L^2)(G)) \]
takes values in $\cB^\oplus_2(\widehat{G},\nu)$ and satisfies $\norm{\cP(f)}_2=\norm{f}_{L^2(G)}$.
Moreover, if $f\in L^1(G)$ and $\pi(f)=0$ for $\nu$-a.e.\ $\pi\in\widehat{G}$, then $f=0$ a.e.\ on~$G$.

\begin{dfn}[Plancherel transform]
The map $\cP$ extends uniquely to a unitary isomorphism from $L^2(G)$ onto $\cB^\oplus_2(\widehat{G},\nu)$. This unitary isomorphism, which we also denote by $\cP$, is called the \dt{Plancherel transform} of $G$.
\end{dfn}

\begin{dfn}[Inverse Fourier transform]\label{def:inverseFourier}
With $G$ and $\nu$ as above, we define a bounded linear map
$\Psi: \cB^\oplus_1(\widehat{G},\nu)\to C_b(G)$ by
\begin{equation}
\Psi(F)(x)  \defeq \int_{\widehat{G}} \Tr (F(\pi)\pi(x)^*)\,d\nu(\pi).
\end{equation}
\end{dfn}

The following result is crucial to our calculations, since (for the particular case of the Heisenberg group) it allows us to work with a vector-valued $L^1$-norm rather than the norm of the Fourier algebra.

\begin{thm}[Arsac]\label{thm:ArsacInverseFourier}
The map $\Psi$ takes values in $\FA(G)$, and is an isometric isomorphism of Banach spaces from
$\cB^{\oplus}_1({\widehat{G}},\nu)$ onto $\FA(G)$.
\end{thm}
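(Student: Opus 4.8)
The plan is to transport the whole statement to the ``Fourier side'' via the Plancherel transform $\cP$ and to recognize $\Psi(F)$ as (the check of) a coefficient function of $\lambda$. The bridge is the factorization $\cS_2\cdot\cS_2\subseteq\cS_1$ together with the intertwining relation $\cP(\lambda(g)\xi)=(\pi(g)\,\cP(\xi)(\pi))_\pi$, which expresses that the left regular representation acts on $\cB^\oplus_2(\widehat G,\nu)=\int^\oplus \cS_2(\cH_\pi)\,d\nu(\pi)$ by fibrewise left multiplication. Concretely, for $\xi,\eta\in L^2(G)$ with $\cP\xi=A=(A_\pi)$ and $\cP\eta=B=(B_\pi)$ in $\cB^\oplus_2(\widehat G,\nu)$, unitarity of $\cP$ and cyclicity of the trace give
\[ (\xi *_\lambda \eta)(g) = \pair{\cP(\lambda(g)\xi)}{\cP\eta} = \int_{\widehat G}\Tr\bigl(A_\pi B_\pi^*\,\pi(g)\bigr)\,d\nu(\pi). \]
Comparing with Definition~\ref{def:inverseFourier} and using $\pi(g)^*=\pi(g^{-1})$, this says precisely that the field $AB^*\defeq(A_\pi B_\pi^*)_\pi$ lies in $\cB^\oplus_1(\widehat G,\nu)$ (since $\norm{A_\pi B_\pi^*}_1\le\norm{A_\pi}_2\norm{B_\pi}_2$) and that $\Psi(AB^*)=\check{(\xi *_\lambda \eta)}$. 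As the check map is an isometric automorphism of $\FA(G)$, this already shows that every such product field is carried by $\Psi$ into $\FA(G)$.

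To prove contractivity, $\norm{\Psi F}_{\FA(G)}\le\norm{F}_1$, I would reduce a general $F\in\cB^\oplus_1(\widehat G,\nu)$ to the product case via a measurable polar decomposition $F(\pi)=U_\pi\abs{F(\pi)}$ and the measurable functional calculus: put $A_\pi\defeq U_\pi\abs{F(\pi)}^{1/2}$ and $B_\pi\defeq\abs{F(\pi)}^{1/2}$, so that $A_\pi B_\pi^*=F(\pi)$ and $\norm{A_\pi}_2^2=\norm{B_\pi}_2^2=\norm{F(\pi)}_1$. Then $A,B\in\cB^\oplus_2(\widehat G,\nu)$ with $\norm{A}_2\norm{B}_2=\norm{F}_1$, whence, by the previous paragraph and \eqref{eq:A(G)-as-A_lambda}, $\norm{\Psi F}_{\FA(G)}\le\norm{\cP^{-1}A}_2\norm{\cP^{-1}B}_2=\norm{F}_1$. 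For surjectivity together with the reverse estimate I would run the correspondence backwards: given $u\in\FA(G)$ and $\veps>0$, choose a factorization $u=\xi *_\lambda\eta$ with $\norm{\xi}_2\norm{\eta}_2\le\norm{u}_{\FA(G)}+\veps$, set $F\defeq(\cP\xi)(\cP\eta)^*$, and use Cauchy--Schwarz, $\int_{\widehat G}\norm{A_\pi}_2\norm{B_\pi}_2\,d\nu\le\norm{A}_2\norm{B}_2$, to obtain $\Psi F=\check u$ with $\norm{F}_1\le\norm{u}_{\FA(G)}+\veps$. Since the check map is onto $\FA(G)$, this exhibits $\Psi$ as a surjective metric quotient map.

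It remains to establish injectivity, which upgrades this metric surjection to a genuine isometry. Suppose $\Psi(F)=0$; writing $F=AB^*$ as above and setting $\xi=\cP^{-1}A$, $\eta=\cP^{-1}B$, the first paragraph gives $\check{(\xi *_\lambda\eta)}=0$, i.e. $\pair{\lambda(g)\xi}{\eta}=0$ for all $g$, so $\eta$ is orthogonal to the cyclic subspace $\overline{\operatorname{lin}}\,\lambda(G)\xi$. Disintegrating this orthogonality over $\widehat G$ and using that each $\pi$ is \emph{irreducible} (so that the closed span of $\{\pi(g)A_\pi:g\in G\}$ in $\cS_2(\cH_\pi)$ is $\cS_2(\cH_\pi)P_\pi$, with $P_\pi$ the projection onto $\overline{\operatorname{ran}}\,A_\pi^*$), one finds for $\nu$-a.e.\ $\pi$ that $B_\pi P_\pi=0$, and hence $A_\pi B_\pi^*=0$; thus $F=0$. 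Combined with the two previous paragraphs this yields that $\Psi$ is an isometric isomorphism of $\cB^\oplus_1(\widehat G,\nu)$ onto $\FA(G)$.

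The underlying algebra is light; the real work, and the step I expect to be the main obstacle, is the measure-theoretic bookkeeping inside the direct integral. Making the polar decomposition and the square-root functional calculus yield genuine \emph{measurable} fields $A,B$, and turning the global orthogonality $\eta\perp\overline{\operatorname{lin}}\,\lambda(G)\xi$ into the fibrewise identities valid $\nu$-almost everywhere, both rest on the standard but delicate machinery of measurable fields of operators and the disintegration of $\VN(G)$ over $\widehat G$ (compare Remark~\ref{r:stinespring-caveat}); I would quote \cite{Fuehr_LNM} and \cite[\S9]{Sti_TAMS59} for these facts. A secondary point to keep straight is the adjoint/check bookkeeping that produced $\check{(\xi *_\lambda\eta)}$ rather than $\xi *_\lambda\eta$, which is harmless only because the check map is an isometric automorphism of $\FA(G)$.
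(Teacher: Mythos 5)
The paper offers no proof of this theorem --- it is quoted from Arsac and F\"uhr --- but your argument is a correct reconstruction of the standard proof in those sources, driven by exactly the ingredient the paper singles out, namely that $\cP$ intertwines $\lambda_G$ with $\int^\oplus_{\widehat{G}}\pi\otimes I_\pi\,d\nu(\pi)$, and your factorization/polar-decomposition bookkeeping (contractivity from $\cS_2\cdot\cS_2\subseteq\cS_1$, metric surjectivity from \eqref{eq:A(G)-as-A_lambda}, the harmless check-map twist) is all sound. The one step worth making explicit is injectivity: to pass from $\eta\perp\overline{\operatorname{lin}}\,\lambda(G)\xi$ to the fibrewise identity $B_\pi P_\pi=0$ for $\nu$-a.e.\ $\pi$, observe that the projection onto this cyclic subspace lies in $\lambda(G)'$, which $\cP$ carries onto the decomposable fields of right multiplications, so that the projection acts fibrewise as $S\mapsto SQ_\pi$ with $Q_\pi\geq P_\pi$; this identification of the commutant is part of the Plancherel theorem in the references you cite, so the delicate point you flagged is indeed covered there.
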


A proof is given in \cite[Theorem 4.12(a)]{Fuehr_LNM}; see also 
Propositions 3.53 and 3.55 of \cite{Arsac}, with the warning that our map $\Psi$ differs from Arsac's by an application of the ``check map'' $u\mapsto \check{u}$. In both proofs one uses the fact that
$\cP$ gives a unitary equivalence of representations $\lambda_G\simeq \int^\oplus_{\pi\in\widehat{G}}\pi\otimes I_\pi \, d\nu(\pi)$
(see \cite[Theorem 3.31]{Fuehr_LNM}.)

Informally, we wish to say that $\Psi$ and $\cP$ are mutually inverse maps. However, to make this claim meaningful one must be more precise about what domain and codomain are being used for each map. The following theorem is enough to ensure that we do not run into difficulties.

\begin{thm}[Fourier inversion]\label{t:StinespringInverseFourier}
Let $F\in \cB^\oplus_1(\widehat{G},\nu)$. Suppose $\Psi(F)\in L^1(G)$. Then $\Psi(F)\in (L^1\cap L^2)(G)$ and
\[ \cP\Psi(F)=F\in \cB^\oplus_1(\widehat{G},\nu)\cap \cB^\oplus_2(\widehat{G},\nu). \]
\end{thm}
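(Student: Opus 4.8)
The plan is to reduce the statement to the single identity $\cP\Psi(F)=F$ and then read off the remaining assertions for free. First I would set $f:=\Psi(F)$. By Theorem~\ref{thm:ArsacInverseFourier}, $f\in\FA(G)\subseteq C_0(G)\subseteq L^\infty(G)$, and the hypothesis gives $f\in L^1(G)$; since $\abs{f}^2\le\opnorm{f}\,\abs{f}$ pointwise, this already forces $f\in L^2(G)$, so $\Psi(F)\in(L^1\cap L^2)(G)$ with no further work. Granting $\cP(f)=F$, the Fact shows $F=\cP(f)\in\cB^\oplus_2(\widehat{G},\nu)$, while $F\in\cB^\oplus_1(\widehat{G},\nu)$ holds by assumption, so every conclusion follows. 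By the Fact, $\cP(f)=(\pi(f))_{\pi\in\widehat{G}}$ with $\pi(f)=\int_G f(x)\pi(x)\,dx$, and the whole problem becomes the identification of this operator field with $F$ for $\nu$-a.e.~$\pi$.

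My primary route would be to quote the abstract $L^1$-inversion theorem of Stinespring \cite[\S9]{Sti_TAMS59}: phrased for the noncommutative $L^p$-spaces of $(\VN(G),\tau)$, it asserts precisely that an element of $L^1(\VN(G),\tau)$ whose inverse transform lands in $L^1(G)$ has that transform in $L^2(G)$, with Plancherel transform equal to the original operator. I would then translate this into the concrete setting by means of the disintegration described in Remark~\ref{r:stinespring-caveat}, identifying $L^p(\VN(G),\tau)$ with $\cB^\oplus_p(\widehat{G},\nu)$ and Stinespring's inverse map with~$\Psi$; matching the two inverse transforms is where Theorem~\ref{thm:ArsacInverseFourier} and a little check-map bookkeeping enter.

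For a more hands-on verification of $\cP(f)=F$ I would argue by duality. For every test function $g\in(L^1\cap L^2)(G)$ I would establish
\[ \int_{\widehat{G}}\Tr\!\big(F(\pi)\pi(g)^*\big)\,d\nu(\pi)=\ip{f}{g}_{L^2(G)}=\int_{\widehat{G}}\Tr\!\big(\pi(f)\pi(g)^*\big)\,d\nu(\pi), \]
the right-hand equality being the polarized Plancherel formula and the left-hand one coming from substituting the definition of $\Psi$ into $\ip{f}{g}=\int_G f\overline{g}$ and applying Fubini (legitimate because $\abs{g(x)}\,\norm{F(\pi)}_1$ is integrable on $G\times\widehat{G}$), after recognizing $\int_G\overline{g(x)}\pi(x)^*\,dx=\pi(g)^*$. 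Equivalently, and more transparently, one computes $\Psi(\cP f)(x)=\int_{\widehat{G}}\Tr(\pi(f)\pi(x)^*)\,d\nu=\tau(\lambda(R_xf))=(R_xf)(e)=f(x)$, using $\pi(f)\pi(x)^*=\pi(R_xf)$ for the right translate $R_xf$ and the Plancherel-weight identity $\tau(\lambda(k))=k(e)$, and then invokes injectivity of $\Psi$ to pass from $\Psi(\cP f)=f=\Psi(F)$ to $\cP f=F$.

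The step I expect to be the real obstacle is precisely this identification, because the two functionals $B\mapsto\int_{\widehat{G}}\Tr(F(\pi)B_\pi^*)\,d\nu$ and $B\mapsto\int_{\widehat{G}}\Tr(\pi(f)B_\pi^*)\,d\nu$ are continuous in different operator-field norms ($\cB^\oplus_1$ and $\cB^\oplus_2$ respectively), so testing against a dense family runs into a circularity between the two things one wants, namely $\cP f=F$ and $F\in\cB^\oplus_2(\widehat{G},\nu)$; likewise the membership $\cP f\in\cB^\oplus_1(\widehat{G},\nu)$ needed to feed $\cP f$ into $\Psi$ is not a consequence of $f\in(L^1\cap L^2)(G)$ alone but genuinely uses $f\in\FA(G)$. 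Discharging this bookkeeping cleanly is exactly what the measurable-operator formalism of \cite[\S9]{Sti_TAMS59} is designed for, which is why I would treat the citation as the main argument and the explicit computation above as corroboration.
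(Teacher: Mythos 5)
Your primary route is exactly the paper's proof: the authors simply cite \cite[Theorem 9.17]{Sti_TAMS59}, translated to the concrete setting via the disintegration of $\VN(G)$ described in Remark~\ref{r:stinespring-caveat} (they also point to \cite[Theorem 4.15]{Fuehr_LNM} as a lengthier alternative). Your supplementary observations --- the cheap derivation of $f\in L^2(G)$ from $f\in(L^1\cap L^\infty)(G)$, and your honest flagging of the circularity that makes the naive duality computation insufficient --- are sound and consistent with why the paper defers entirely to the measurable-operator formalism rather than arguing by hand.
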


\begin{proof}
With appropriate changes of notation, and with Remark~\ref{r:stinespring-caveat} kept in mind, this is a special case of \cite[Theorem 9.17]{Sti_TAMS59}.
A much lengthier justification, which uses less theory of von Neumann algebras and measurable operators than \cite{Sti_TAMS59} but is more opaque, is given in the proof of \cite[Theorem 4.15]{Fuehr_LNM}.
\end{proof}

\begin{cor}\label{c:harder than you think}
Let $f\in (\FA\cap L^1)(G)$. Then $f\in (L^1\cap L^2)(G)$, $\cP(f)\in \cB^\oplus_1(\widehat{G},\nu)$ and $\norm{\cP(f)}_1 = \norm{f}_{\FA(G)}$.
\end{cor}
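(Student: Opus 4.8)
The plan is to combine Arsac's theorem (Theorem~\ref{thm:ArsacInverseFourier}) with the Fourier inversion theorem (Theorem~\ref{t:StinespringInverseFourier}): I would use the former to manufacture a candidate operator field from $f$, and the latter to certify that this field really is the Plancherel transform $\cP(f)$ and that $f$ has the asserted integrability.

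First I would invoke Arsac's theorem. Since $f\in\FA(G)$ and $\Psi:\cB^\oplus_1(\widehat{G},\nu)\to\FA(G)$ is an isometric isomorphism, there is a unique $F\in\cB^\oplus_1(\widehat{G},\nu)$ with $\Psi(F)=f$, and moreover $\norm{F}_1=\norm{f}_{\FA(G)}$. This already produces an element of $\cB^\oplus_1$ with the correct norm; what remains is to show that this $F$ is genuinely the Plancherel transform of $f$, and that $f$ lies in $(L^1\cap L^2)(G)$.

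Next I would apply the Fourier inversion theorem to this $F$. By hypothesis $f\in L^1(G)$, so $\Psi(F)=f\in L^1(G)$, and Theorem~\ref{t:StinespringInverseFourier} applies verbatim. It delivers both remaining conclusions simultaneously: that $\Psi(F)\in(L^1\cap L^2)(G)$, i.e.\ $f\in(L^1\cap L^2)(G)$; and that $\cP\Psi(F)=F$, i.e.\ $\cP(f)=F\in\cB^\oplus_1(\widehat{G},\nu)$. Chaining this identification with the norm equality from Arsac's theorem then gives $\norm{\cP(f)}_1=\norm{F}_1=\norm{f}_{\FA(G)}$, which is the final assertion.

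The substance here—and the reason the statement deserves its label—lies not in any computation but in the fact that $\Psi$ and $\cP$ are only \emph{formally} mutually inverse, as flagged in the discussion preceding Theorem~\ref{t:StinespringInverseFourier}. One cannot simply declare that the field $\Psi^{-1}(f)$ supplied by Arsac's theorem equals the Plancherel transform $\cP(f)$, since the two maps carry different natural domains and codomains (coefficient functions versus $L^2$-functions), and $\cP$ is a priori only defined on $L^2(G)$. The Fourier inversion theorem is precisely the device that legitimizes the identification, and the $L^1$-hypothesis on $f$ is exactly what drags $f$ into the overlap $(L^1\cap L^2)(G)$ on which both transforms are unambiguously defined and agree. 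So the main obstacle is conceptual rather than technical: one must resist conflating the two transforms outright and instead route the identification carefully through Theorem~\ref{t:StinespringInverseFourier}, all of whose genuine difficulty has already been absorbed by Stinespring's work.
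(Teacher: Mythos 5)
Your proof is correct and follows exactly the same route as the paper's: invoke Theorem~\ref{thm:ArsacInverseFourier} to produce $F\in\cB^\oplus_1(\widehat{G},\nu)$ with $\Psi(F)=f$ and $\norm{F}_1=\norm{f}_{\FA(G)}$, then apply Theorem~\ref{t:StinespringInverseFourier} (using $f\in L^1(G)$) to conclude $f\in(L^1\cap L^2)(G)$ and $\cP(f)=F$. Your commentary on why the identification of $\Psi^{-1}$ with $\cP$ is the genuine content of the statement is accurate and well placed.
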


\begin{proof}
Let $f\in(\FA\cap L^1)(G)$.
By Theorem~\ref{thm:ArsacInverseFourier} there exists $F\in\cB^\oplus_1(\widehat{G})$ such that $\Psi(F)=f$ and $\norm{F}_1=\norm{f}_{\FA(G)}$.
By Theorem~\ref{t:StinespringInverseFourier}, $\cP(f)(\omega)=F(\omega)$ for $\nu$-a.e. $\omega\in\widehat{G}$, and the result follows.
\end{proof}

For certain~$G$ --- in particular, for the Heisenberg group $\bbH$ --- 
one can find a $\nu$-conull subset $\conull\subseteq\widehat{G}$ and choose a single Hilbert space $\cH$ on which to represent all $\pi\in\conull$.
This leads to a great simplification in the ``dual'' descriptions of $\FA(G)$ and $L^2(G)$, since the spaces $\cB^\oplus_p(\widehat{G},\nu)$ now take the form of operator-valued $L^p$-spaces. Since the ``dual'' picture of $\FAH$ is key to everything we do in this paper, we use the next subsection to collate some basic facts on vector-valued $L^p$-spaces and issues of measurability, which will be needed in the later sections.
\end{subsection}

\begin{subsection}{Vector-valued $L^p$-spaces, and the Bochner integral}
\label{ss:vector valued}

The basics of measurability and the Bochner integral can be found in \cite[Chapter 2]{DiestelUhl_book}. Strictly speaking, \cite{DiestelUhl_book} works exclusively with \emph{finite} measure spaces, but everything we need can be extended from the finite to the $\sigma$-finite setting in a straightforward way. In any case, the only places where we make serious use of precise properties of the Bochner integral is in Section~\ref{s:dualconv}, and there our measure spaces will be either $\Real$ or $\Real^2$ with usual Lebesgue measure; for an alternative reference, which only discusses those two measure spaces, see~\cite[Chapter 1]{ABHN_book}.

Let $X$ be a Banach space, let $(\Omega,\mu)$ be a $\sigma$-finite measure-space.
A \dt{simple function} $\Omega\to X$ is one of the form $\sum_{i=1}^m {\bf 1}_{E_i} x_i$ where $E_1,\dots, E_m$ are measurable subsets of $\Omega$ and $x_1,\dots,x_m\in~X$.
A function $F:\Omega\to X$ is \dt{(strongly)} measurable if it is the pointwise limit of a sequence of simple functions; it is called \dt{weakly measurable} if for each $\psi\in X^*$, the function $\psi\circ F:\Omega\to\Cplx$ is measurable.
If $X$ is a dual Banach space with predual $X_*$\/, a function $F:\Omega\to X$ is called \dt{weak-star measurable} if $\phi\circ F:\Omega\to\Real$ is measurable for each $\phi\in X_*$.

\begin{rem}\label{r:SOT-implies-wsmeas}
Let $\cK$ be a separable Hilbert space and recall that $\Bdd(\cK)_*=\cS_1(\cK)$. Hence, weak-star measurability of $F:\Omega\to\Bdd(\cK)$ is equivalent to measurability of each ``coefficient function'' $\omega\mapsto \ip{ F(\omega)\xi}{\eta}$ for every $\xi,\eta\in \cK$ (one direction is trivial and the other follows by taking limits of linear combinations of rank-one operators). In particular, every WOT-continuous function $\Omega\to \Bdd(\cK)$ is weak-star measurable.
\end{rem}


In Section~\ref{s:dualconv} we will need to use some standard properties of the Bochner integral.
The definition of Bochner integrability can be found in \cite[Chapter 1]{ABHN_book}. We will use the following characterization (see \cite[Theorem 1.1.4]{ABHN_book}):
a function $F:\Omega\to X$ is Bochner integrable if and only if it satisfies the following two conditions: (1) $F$ is measurable (2) the function $\omega\to\norm{F(\omega)}$ is integrable.

\begin{dfn}[Vector-valued $L^p$-spaces]
Let $(\Omega,\mu)$ be a $\sigma$-finite measure space and let $X$ be a Banach space.
For $1\leq p < \infty$, we define $L^p(\Omega,X)$ to be the space of all measurable functions $F:\Omega\to X$ which satisfy $\int_\Omega \norm{F(\omega)}^p \, d\mu <\infty$, modulo identification of functions that only differ on $\mu$-null sets. This is a Banach space for the norm
\[ \norm{F}_{L^p(\Omega,X)} \defeq \left( \int_{\Omega} \norm{F(\omega)}^p \, d\mu(\omega) \right)^{1/p} \]
$L^\infty(\Omega,X)$ denotes the space of essentially bounded, measurable functions $\Omega\to X$,
 modulo identification of functions that only differ on $\mu$-null sets.
This is a Banach space for the norm
\[ \norm{F}_{L^\infty(\Omega,X)} \defeq  \esssup_{\omega\in\Omega}\norm{F(\omega)}. \]
\end{dfn}

\end{subsection}

\end{section}

\begin{section}{Representations and Plancherel measure for the Heisenberg group}
\label{s:Heisenberg}
We now specialize to the case of the Heisenberg group, where the general results of Subsection~\ref{ss:general Plancherel} can be made much more concrete. None of the results in this section are new, but they are stated here for sake of consistency of terminology and notation.

\begin{dfn}[Heisenberg group]
We define the (real, $3$-dimensional) \dt{Heisenberg group} $\bbH$ to be the set $\Real^3$, equipped with its usual smooth manifold structure and equipped with the multiplication rule
\begin{equation}\label{eq:unpolarized}
(a_1,b_1,c_1) (a_2,b_2,c_2) \defeq (a_1+a_2,b_1+b_2, \frac{1}{2}(a_1b_2-a_2b_1)+c_1+c_2).
\end{equation}
This makes $\bbH$ into a connected, simply-connected, nilpotent Lie group; the centre of $\bbH$ is the subgroup $\{(0,0,c)\st c\in\Real\}$.
\end{dfn}

Note that this is sometimes called the \dt{symmetrized} or \dt{unpolarized} form of the Heisenberg group.
The \dt{polarized} form of the Heisenberg group may be described as:
\[ \Hpol\defeq \left\{ \left( \begin{matrix} 1 & a & c \\ 0 & 1 & b \\ 0 & 0 & 1 \end{matrix}\right) \colon a,b,c\in\Real\right\} \subset {\rm GL}(3,\Real).\]
$\bbH$ and $\Hpol$ are isomorphic as Lie groups, so one can easily convert results and definitions for one into results and definitions for the other.
Our choice to work with $\bbH$ rather than $\Hpol$, as well as the form of the next definition, follows \cite[\S7.6]{Foll_AHAbook}.

\begin{dfn}[Schr\"odinger representations]
Let $t\in\Real^*$. There is a continuous unitary representation $\pi_t:\bbH\to\cU(L^2(\Real))$ given by
\begin{equation}\label{eq:define schroedinger}
\pi_t(x,y,z)f(w)=e^{2\pi i tz+\pi i tyx}e^{-2\pi i tyw}f(w-x).
\end{equation}
It can be shown that $\pi_t$ is irreducible. We call it the \dt{Schr\"odinger representation} indexed by~$t$.
\end{dfn}

\begin{rem}[$\pi_t$ as an induced representation]\label{r:sch-as-induced}
$\bbH$ is isomorphic as a Lie group to a certain semidirect product $\Real^2\rtimes_{\alpha}\Real$ (this is slightly easier to see if one works with the polarized form $\Hpol$, see e.g.~Example 4.10 in~\cite{KaniuthTaylor}).
When we induce $1$-dimensional representations of $\Real^2$ up to $\Real^2\rtimes\Real$, Mackey theory shows us that some of these induced representations are irreducible, and in fact they are unitarily equivalent to the Schr\"odinger representations. See Example 4.38 in~\cite{KaniuthTaylor} for further details.
Although we will not explicitly use this perspective during the present paper, it serves to explain the ``fusion rules'' for the Schr\"odinger representations, which we \emph{do} need. We will return to this in Remark~\ref{r:out-of-hat}.
\end{rem}

Although $\widehat{\bbH}$ is not Hausdorff, it can be shown that the subset
$\conull\defeq \{ \pi_t \st t\in\Real^*\}$ is dense in $\widehat{\bbH}$ and is homeomorphic in the subspace topology to $\Real^*$: moreover, $\conull$ is  conull for the Plancherel measure~$\nu$, and one can identify the measure space $(\conull,\nu)$ with the measure space $(\Real^*, |t|dt)$. 
(For a fairly self-contained proof, see \cite[\S7.6]{Foll_AHAbook}.)

\begin{notn}
For $p\in [1,\infty)$ we abbreviate $\cS_p(L^2(\Real))$ to $\STN_p$; we write $\sK$ for $\cK(L^2(\Real))$ and $\sB$ for $\Bdd(L^2(\Real))$.
\end{notn}

The spaces $\cB^\oplus_p(\widehat{\bbH},\nu)$
admit a much simpler description: we can identify $\cB^\oplus_p(\widehat{\bbH},\nu)$ with the vector-valued $L^p$-space $L^p(\conull,\STN_p)$, in the sense of Subsection~\ref{ss:vector valued}. In particular, the space $\cB^\oplus_1(\widehat{\bbH},\nu)$, which by Theorem~\ref{thm:ArsacInverseFourier} 
is isometrically isomorphic to the Fourier algebra $\FAH$, is nothing but the space of Bochner integrable $\STN_1$-valued functions on the measure space $(\Real^*, |t|dt)$.

\para{A change of measure}
To simplify some formulas in Sections \ref{s:define derivation}
and \ref{s:dualconv}, it is convenient not to work on $(\conull,\nu)$, nor on $(\Real^*,|t|dt)$, but on $\Real$ equipped with usual Lebesgue measure.
So we introduce $\cT: L^1(\Real,\STN_1)\to L^1(\conull,\STN_1)$ given by $\cT(F)(t)=|t|^{-1}F(t)$\/: this is an isometric isomorphism of Banach spaces.
Note that $\Psi\cT$ is given explicitly by
\begin{equation}\label{eq:tweaked}
\Psi\cT(F)(\bx) = \int_{\Real} \Tr[ F(t)\pi_t(\bx)^*]\,dt \qquad(F\in\LRS).
\end{equation}

Next, for sake of clarity, we state some results mentioned in Subsection~\ref{ss:general Plancherel} in the particular form that we need for later sections.

\vfill\eject

\begin{thm}[Fourier inversion for $\bbH$]
\label{t:F-inversion-for-AH} \

\begin{newnum}
\item\label{li:Psi T maps onto A(H)}
 $\Psi\cT$, as defined in \eqref{eq:tweaked}, maps $L^1(\Real,\STN_1)$ isometrically onto~$\FAH$.

\item
\label{li:hard bit}
 Let $f\in (\FA\cap L^1)(\bbH)$. Then the function $t\mapsto |t|\pi_t(f)$ belongs to $\LRS$, and coincides with $(\Psi\cT)^{-1}(f)$. In particular
\[ \int_{\Real} |t|\norm{\pi_t(f)}_1\,dt = \norm{f}_{\FAH}\,.\]

\end{newnum}
\end{thm}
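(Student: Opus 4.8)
The plan is to reduce everything to the general results of Subsection~\ref{ss:general Plancherel}, already quoted in the excerpt, by transporting them across the identifications $\cB^\oplus_p(\widehat{\bbH},\nu)\iso L^p(\conull,\STN_p)$ and $\cT:\LRS\to L^1(\conull,\STN_1)$. For part~\eqref{li:Psi T maps onto A(H)}, I would simply compose known isometries. By Theorem~\ref{thm:ArsacInverseFourier} (Arsac), $\Psi$ is an isometric isomorphism of $\cB^\oplus_1(\widehat{\bbH},\nu)$ onto $\FAH$; under the identification of $\cB^\oplus_1(\widehat{\bbH},\nu)$ with $L^1(\conull,\STN_1)$ noted just before the theorem, and precomposing with the isometric isomorphism $\cT:\LRS\to L^1(\conull,\STN_1)$, the map $\Psi\cT:\LRS\to\FAH$ is an isometric isomorphism. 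The only thing to check is that this abstractly-defined composite is given by the explicit formula~\eqref{eq:tweaked}; this is a direct unwinding of the definitions: $\Psi\cT(F)(\bx)=\int_{\conull}\Tr[(\cT F)(\pi)\pi(\bx)^*]\,d\nu(\pi)$, and substituting $(\cT F)(t)=|t|^{-1}F(t)$ together with the identification of $(\conull,\nu)$ with $(\Real^*,|t|\,dt)$ turns the $|t|^{-1}$ and the density $|t|$ into Lebesgue measure $dt$, recovering~\eqref{eq:tweaked}.

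For part~\eqref{li:hard bit}, I would invoke Corollary~\ref{c:harder than you think}. Given $f\in(\FA\cap L^1)(\bbH)$, that corollary gives $f\in(L^1\cap L^2)(\bbH)$, $\cP(f)\in\cB^\oplus_1(\widehat{\bbH},\nu)$, and $\norm{\cP(f)}_1=\norm{f}_{\FAH}$. Now $\cP(f)=(\pi(f))_{\pi\in\widehat{\bbH}}$ by the definition of the Plancherel transform on $(L^1\cap L^2)(G)$, so under the identification $(\conull,\nu)\cong(\Real^*,|t|\,dt)$ this is the field $t\mapsto\pi_t(f)$, and $\norm{\cP(f)}_1=\int_{\Real}|t|\,\norm{\pi_t(f)}_1\,dt=\norm{f}_{\FAH}$. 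Applying $\cT^{-1}$ (multiplication by $|t|$) moves this to the field $t\mapsto|t|\pi_t(f)$ in $\LRS$. Finally, to see this field equals $(\Psi\cT)^{-1}(f)$, I would note that $\Psi=\cP^{-1}$ on the relevant domain: by Theorem~\ref{t:StinespringInverseFourier} (Fourier inversion), since $f=\Psi(\cP(f))\in L^1(\bbH)$ we have $\cP\Psi(\cP f)=\cP f$, so applying $(\Psi\cT)^{-1}=\cT^{-1}\Psi^{-1}$ to $f$ recovers exactly the field $t\mapsto|t|\pi_t(f)$.

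The only genuine subtlety, and the step I would treat most carefully, is the bookkeeping of the change of measure: one must be sure that the density $|t|$ appearing in the Plancherel measure and the factor $|t|^{-1}$ built into $\cT$ cancel in the way claimed, so that the norm on the $\LRS$ side really is the plain $\int_\Real|t|\norm{\pi_t(f)}_1\,dt$ rather than something with a spurious extra weight. I would verify this by checking that $\cT$ is an isometry for these specific norms, i.e.\ $\norm{\cT F}_{L^1(\conull,\STN_1)}=\int_{\Real^*}\norm{(\cT F)(t)}_1\,|t|\,dt=\int_{\Real^*}\norm{F(t)}_1\,dt=\norm{F}_{\LRS}$, which is exactly the assertion made when $\cT$ was introduced. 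Everything else is a routine transport of the abstract Fourier inversion machinery through fixed isometric identifications, so no new analysis is required beyond what Subsection~\ref{ss:general Plancherel} already supplies.
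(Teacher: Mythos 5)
Your proposal is correct and is essentially the paper's own justification: the paper explicitly introduces Theorem~\ref{t:F-inversion-for-AH} as a restatement of the general results of Subsection~\ref{ss:general Plancherel} (Theorem~\ref{thm:ArsacInverseFourier}, Theorem~\ref{t:StinespringInverseFourier} and Corollary~\ref{c:harder than you think}) specialized to $\bbH$ via the identification $(\conull,\nu)\cong(\Real^*,|t|\,dt)$ and the change-of-measure map $\cT$, which is exactly the transport argument you carry out. Your bookkeeping of the densities $|t|$ and $|t|^{-1}$ is also the right point to be careful about, and it checks out.
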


The following identity, which will be useful at one point in proving Theorem~\ref{t:W-is-a-module-norm}, is a special case of \cite[Theorem 9.6]{Sti_TAMS59}.

\begin{lem}[Adjoint relation]
\label{l:adjoint-relation}
Let $g\in L^1(\bbH)$ and let $F\in L^1(\Real,\STN_1)$. Then
\begin{equation}\label{eq:adjoint}
\int_{\bbH} g(\bx) \Psi\cT(F)(\bx) \,d\bx = \int_{\Real^*} \Tr(\pi_t(\check{g})F(t))\,dt\,.
\end{equation}
\end{lem}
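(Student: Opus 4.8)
The plan is to prove the adjoint relation \eqref{eq:adjoint} by moving from the group side to the Fourier side using the Plancherel formula, and treating the integrals as genuine Bochner integrals whose order can be interchanged. First I would unfold the definition of $\Psi\cT(F)$ from \eqref{eq:tweaked}, writing
\[
\int_{\bbH} g(\bx)\,\Psi\cT(F)(\bx)\,d\bx
= \int_{\bbH} g(\bx) \int_{\Real} \Tr\bigl[F(t)\,\pi_t(\bx)^*\bigr]\,dt\,d\bx.
\]
The goal is then to justify interchanging the $t$- and $\bx$-integrals, after which the inner integral over $\bbH$ should assemble into an operator of the form $\pi_t(\check g)$ paired against $F(t)$.

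Next I would carry out the key algebraic identification. Pulling the $\bx$-integral inside the trace (using linearity and continuity of $\Tr$ against the trace-class operator $F(t)$), the inner integral becomes
\[
\Tr\Bigl[F(t) \int_{\bbH} g(\bx)\,\pi_t(\bx)^*\,d\bx\Bigr].
\]
Here I would recognize $\int_{\bbH} g(\bx)\,\pi_t(\bx)^*\,d\bx$ as an integrated representation: since $\pi_t(\bx)^* = \pi_t(\bx^{-1})$ and $g(\bx) = \check g(\bx^{-1})$, a change of variable $\bx\mapsto\bx^{-1}$ (using unimodularity of $\bbH$, so Haar measure is inversion-invariant) turns this into $\int_{\bbH}\check g(\bx)\,\pi_t(\bx)\,d\bx = \pi_t(\check g)$. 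Substituting gives exactly $\Tr(F(t)\,\pi_t(\check g)) = \Tr(\pi_t(\check g)F(t))$ by the trace property, and integrating over $t\in\Real$ (equivalently over $\Real^*$, since $\{0\}$ is null) yields the right-hand side.

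The technical heart of the argument — and the step I expect to be the main obstacle — is rigorously justifying the Fubini-type interchange and the manipulations inside the trace. I would want to check that the scalar-valued double integrand is absolutely integrable, or equivalently set up a vector-valued Fubini theorem for the Bochner integral as provided by the references in Subsection~\ref{ss:vector valued}. A clean way to do this is to estimate
\[
\int_{\Real}\int_{\bbH} \abs{g(\bx)}\,\bigl\lvert\Tr[F(t)\,\pi_t(\bx)^*]\bigr\rvert\,d\bx\,dt
\leq \int_{\Real}\int_{\bbH} \abs{g(\bx)}\,\norm{F(t)}_1\,\norm{\pi_t(\bx)^*}_\infty\,d\bx\,dt,
\]
using the standard bound $\abs{\Tr(AB)}\le\norm{A}_1\norm{B}_\infty$. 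Since $\pi_t(\bx)$ is unitary we have $\norm{\pi_t(\bx)^*}_\infty=1$, so the right-hand side factors as $\norm{g}_{L^1(\bbH)}\int_{\Real}\norm{F(t)}_1\,dt<\infty$, finite because $g\in L^1(\bbH)$ and $F\in L^1(\Real,\STN_1)$. This finiteness legitimizes the interchange of integrals and, together with weak-star measurability of the operator fields (cf.\ Remark~\ref{r:SOT-implies-wsmeas}), the pulling of the Bochner integral through the trace. Since the statement asserts this is a special case of \cite[Theorem 9.6]{Sti_TAMS59}, an alternative and shorter route is simply to match notation and invoke that result, keeping Remark~\ref{r:stinespring-caveat} in mind; I would likely present the self-contained computation above as the primary argument and cite Stinespring for the underlying general principle.
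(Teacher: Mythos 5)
Your proposal is correct and matches the paper: the paper's formal proof is just the citation to \cite[Theorem 9.6]{Sti_TAMS59}, but the remark immediately following the lemma sketches exactly the direct Fubini--Tonelli argument you carry out (absolute integrability via $\abs{\Tr(AB)}\leq\norm{A}_1\norm{B}_\infty$ and unitarity, then inversion-invariance of Haar measure to identify the inner integral with $\pi_t(\check{g})$). Nothing is missing.
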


\begin{rem}
We used \cite{Sti_TAMS59} as a convenient reference here. It may be worth noting that Lemma~\ref{l:adjoint-relation} does not really rely on any form of the Plancherel formula or Fourier inversion: indeed, it can be proved directly from the definition of $\Psi\cT$, by using the Fubini--Tonelli theorem to evaluate
\[ \int_{\bbH\times\Real} g(\bx)\Tr(F(t)\pi_t(\bx)^*)\,d(\bx,t)\]
as an iterated integral in two different ways.
\end{rem}

\end{section}
\begin{section}{Defining our derivation}
\label{s:define derivation}
Let $\cC= (\FA\cap C^1_c)(\bbH)$: clearly this is an algebra with respect to pointwise product. 
Moreover, since $f*_\lambda g\in\cC$ whenever $f,g\in C^1_c(\bbH)$, $\cC$ is a dense subalgebra of $\FA(\bbH)$.
Let  $\dd_Z :\cC\to C_c(\bbH)$ be defined by
\begin{equation}
\dd_Z f(x,y,z) = - \frac{1}{2\pi i} \dbydz{f}(x,y,z) .
\end{equation}
Key to our approach is the fact that the ``Fourier multiplier'' corresponding to $\dd_Z$ is very simple.

\begin{lem}\label{lem:d-hat}
Let $f\in \cC$. Then 
 $\pi_t(\dd_Z f) =  t \pi_t(f)$ for all $t\in\Real^*$.
\end{lem}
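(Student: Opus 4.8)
The plan is to verify the claimed operator identity \emph{weakly}, by pairing both sides against arbitrary vectors $\xi,\eta\in L^2(\Real)$; this converts the assertion into an elementary identity between absolutely convergent scalar integrals and sidesteps any delicate discussion of operator-valued integration by parts. Recall that $\pi_t(g)=\int_{\bbH} g(\bx)\pi_t(\bx)\,d\bx$ is the integrated form of the representation, so that for $g\in L^1(\bbH)$ one has $\pair{\pi_t(g)\xi}{\eta}=\int_{\bbH} g(\bx)\,\pair{\pi_t(\bx)\xi}{\eta}\,d\bx$. Since $f\in C^1_c(\bbH)$, both $f$ and $\dd_Z f\in C_c(\bbH)$ lie in $L^1(\bbH)$, so $\pi_t(f)$ and $\pi_t(\dd_Z f)$ are well-defined bounded operators and these pairings make sense.

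The first step is to read off the $z$-dependence of $\pi_t$ directly from \eqref{eq:define schroedinger}. Inspecting the formula, one sees that the central variable $z$ enters only through the scalar phase $e^{2\pi i tz}$; that is, $\pi_t(x,y,z)=e^{2\pi i tz}\,\pi_t(x,y,0)$ as unitary operators. Consequently the coefficient function $c(\bx):=\pair{\pi_t(\bx)\xi}{\eta}$ factorizes as $c(x,y,z)=e^{2\pi i tz}\,c(x,y,0)$; in particular it is smooth in $z$ and satisfies $\dbydz{c}=2\pi i t\,c$.

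The second step is the computation itself. Starting from
\[ \pair{\pi_t(\dd_Z f)\xi}{\eta}=-\frac{1}{2\pi i}\int_{\bbH} \dbydz{f}(\bx)\,c(\bx)\,d\bx, \]
I would invoke the Fubini--Tonelli theorem --- justified because $\dbydz{f}$ is continuous with compact support while $c$ is bounded --- to carry out the $z$-integral first, and then integrate by parts in $z$. The boundary terms vanish because $f$ has compact support, so the derivative transfers onto $c$; using $\dbydz{c}=2\pi i t\,c$, the prefactor $-\tfrac{1}{2\pi i}$ combines with $2\pi i t$ to give $t$:
\[ \pair{\pi_t(\dd_Z f)\xi}{\eta}=\frac{1}{2\pi i}\int_{\bbH} f(\bx)\,\dbydz{c}(\bx)\,d\bx=t\int_{\bbH} f(\bx)\,c(\bx)\,d\bx=t\,\pair{\pi_t(f)\xi}{\eta}. \]
As $\xi,\eta\in L^2(\Real)$ were arbitrary, this yields $\pi_t(\dd_Z f)=t\,\pi_t(f)$, as claimed.

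There is no serious obstacle here: the only care required is in justifying the interchange of integration (handled by the compact support of $f$) and in interpreting the operator identity weakly, so that the integration by parts takes place purely at the level of scalar functions of one real variable. The single structural fact that makes everything collapse so cleanly is that the central variable $z$ appears in $\pi_t$ only through the character $e^{2\pi i tz}$, which is precisely why the normalized derivative $\dd_Z$ acts as the elementary multiplier $t$ on the Fourier side.
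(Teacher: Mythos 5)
Your proof is correct and follows essentially the same route as the paper's: integrate by parts in $z$ (boundary terms vanishing by compact support) and use the fact that $z$ enters $\pi_t$ only through the phase $e^{2\pi i tz}$, so that $\frac{\partial}{\partial z}\pi_t(\bx)=2\pi i t\,\pi_t(\bx)$. The only difference is that you carry out the computation weakly against vectors $\xi,\eta$, which is a harmless technical reformulation of the paper's operator-valued version.
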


\begin{proof}
Observe that
\[ \begin{aligned}
2\pi i\pi_t(\dd_Z f)
 = - \int_{\bbH} \dbydz{f}(\bx) \pi_t(\bx) \,d\bx 
 = \int_{\bbH} f(\bx) \frac{\partial}{\partial z}\pi_t(\bx) \,d\bx \,.
\end{aligned} \]
(There are no issues with differentiating under the integral sign or integrating by parts, since $f$ and $\dbydz{f}$ are continuous with compact support.) A straightforward calculation (see \eqref{eq:define schroedinger}) shows that $\frac{\partial}{\partial z}\pi_t(\bx) = 2\pi i t \pi_t(\bx)$,
and the rest is clear.
\end{proof}

We now specify the target space for our supposed derivation.
Given $f\in L^1(\bbH)$, define\hfill\break
\[ \wnorm{f} \defeq \int_{\Real} \opnorm{\pi_t(f)} \,dt \in [0,\infty] \,,\]
and then let $\sW_0\defeq \{f\in L^1(\bbH) \st \wnorm{f} < \infty\}$. Note that on $\sW_0$, $\wnorm{\cdot}$ is not just a seminorm, but a genuine norm
 (see the comments in Section~\ref{ss:general Plancherel}).
 Finally, let $\sW$ be the completion of $\sW_0$ with respect to the norm $\wnorm{\cdot}$. (Morally speaking, we think of $\sW$ as consisting of certain distributions on $\bbH$ whose Fourier transforms belong to $\LRB$.)



\begin{lem}\label{l:bounded}
If $f\in\cC$ then $\dd_Z f\in\sW_0$, and $\wnorm{\dd_Z f}\leq \norm{f}_{\FAH}$.
\end{lem}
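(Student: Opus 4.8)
The plan is to compute $\wnorm{\dd_Z f}$ directly on the ``Fourier side'' and bound it by the trace-norm integral that Theorem~\ref{t:F-inversion-for-AH} identifies with $\norm{f}_{\FAH}$. The whole argument is short because all the analytic content has been front-loaded into Lemma~\ref{lem:d-hat} and the Fourier inversion theorem; here we merely assemble these ingredients.

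First I would observe that since $f\in C^1_c(\bbH)$, the partial derivative $\dbydz{f}$ is continuous with compact support, so $\dd_Z f\in C_c(\bbH)\subseteq L^1(\bbH)$. Thus $\wnorm{\dd_Z f}$ is a well-defined element of $[0,\infty]$, and it is meaningful to ask whether $\dd_Z f\in\sW_0$. (The integrand $t\mapsto\opnorm{\pi_t(\dd_Z f)}$ is measurable: for any $g\in L^1(\bbH)$ the operator field $t\mapsto\pi_t(g)$ is weak-star measurable, so its operator-norm function is measurable; this is already implicit in the definition of $\wnorm{\cdot}$.)

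The central step is to invoke Lemma~\ref{lem:d-hat}, which gives $\pi_t(\dd_Z f)=t\,\pi_t(f)$ for every $t\in\Real^*$, whence $\opnorm{\pi_t(\dd_Z f)}=|t|\,\opnorm{\pi_t(f)}$ pointwise in $t$. Using the elementary Schatten inequality $\opnorm{A}=\norm{A}_\infty\leq\norm{A}_1$, I would then estimate
\[ \wnorm{\dd_Z f}=\int_{\Real} |t|\,\opnorm{\pi_t(f)}\,dt\leq\int_{\Real}|t|\,\norm{\pi_t(f)}_1\,dt. \]
Finally, since $f\in\cC\subseteq(\FA\cap L^1)(\bbH)$, Theorem~\ref{t:F-inversion-for-AH}\ref{li:hard bit} tells us that the right-hand integral equals $\norm{f}_{\FAH}$. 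Combining these gives $\wnorm{\dd_Z f}\leq\norm{f}_{\FAH}<\infty$, which simultaneously shows $\dd_Z f\in\sW_0$ and establishes the claimed bound.

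I do not expect any serious obstacle. The only place where one must be slightly careful is passing from the operator norm to the trace norm: this is precisely the (harmless) inequality $\norm{A}_\infty\le\norm{A}_1$, and it is exactly the slack in this bound that makes $\sW$ a strictly larger target space than the one used for the reduced Heisenberg group in \cite{CG_WAAG1}. Everything else is a direct substitution of the two cited results.
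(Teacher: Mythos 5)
Your proof is correct and follows exactly the same route as the paper's: apply Lemma~\ref{lem:d-hat} to rewrite $\wnorm{\dd_Z f}$ as $\int_{\Real}\opnorm{t\pi_t(f)}\,dt$, bound the operator norm by the trace norm, and invoke Theorem~\ref{t:F-inversion-for-AH} to identify the resulting integral with $\norm{f}_{\FAH}$. The extra remarks on measurability and on where the slack enters are fine but not needed.
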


\begin{proof}
First note that $\dd_Z f \in C_c(\bbH)\subset L^1(\bbH)$. By Lemma~\ref{lem:d-hat},
\[ \wnorm{\dd_Z f} = \int_{\Real} \opnorm{t\pi_t(f)}\,dt \leq \int_{\Real} |t| \norm{\pi_t(f)}_1 \,dt \,; \]
and since $f\in (\FA\cap L^1)(\bbH)$, applying Theorem~\ref{t:F-inversion-for-AH} completes the proof.
\end{proof}

\begin{thm}[Creating a target module]
\label{t:W-is-a-module-norm}
We have
\begin{equation}\label{eq:W-module-norm}
 \wnorm{fh} \leq \norm{f}_{\FAH} \wnorm{h} \qquad\text{for all $f\in \FA(\bbH)$ and all $h\in \sW_0$\/.} \end{equation}
Consequently:
\begin{newnum}
\item $\sW_0$ is a sub-$\FA(\bbH)$-module of $L^1(\bbH)$, for pointwise product;
\item $\sW$ becomes a Banach $\FA(\bbH)$-module in a way that continuously extends the $\FA(\bbH)$-action on $\sW_0$.
\end{newnum}
\end{thm}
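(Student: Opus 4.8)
The plan is to transport the entire inequality \eqref{eq:W-module-norm} to the ``Fourier side'' and there recognise pointwise multiplication as an operator-valued convolution. Via $\Psi\cT$ (Theorem~\ref{t:F-inversion-for-AH}) I identify $\FAH$ isometrically with $\LRS$, writing $f=\Psi\cT(F)$ with $\norm{F}_1=\norm{f}_\FAH$; and the very definition of $\wnorm{\cdot}$ identifies $(\sW_0,\wnorm{\cdot})$ isometrically with the subspace of $\LRB$ consisting of fields $t\mapsto\pi_t(h)$. So it suffices to show that the bilinear map induced on the Fourier side by $(f,h)\mapsto fh$ is bounded from $\LRS\times\LRB$ into $\LRB$ with norm at most~$1$.

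First I would compute this induced product. Substituting $f(\bx)=\int_\Real\Tr(F(s)\pi_s(\bx)^*)\,ds$ into $\pi_t(fh)=\int_\bbH f(\bx)h(\bx)\pi_t(\bx)\,d\bx$ and interchanging the order of integration, the inner $\bx$-integral couples $\pi_s(\bx)^*$ with $\pi_t(\bx)$. Because the central character of $\pi_t$ is $z\mapsto e^{2\pi itz}$, integrating over the central variable forces the two representations to fuse along $t=s+u$; this is precisely the ``fusion rule'' for Schr\"odinger representations. The outcome is an explicit twisted operator-valued convolution, of the schematic form
\[ \pi_t(fh)=\int_\Real U_{s,t}\,F(s)\,V_{s,t}\,\pi_{t-s}(h)\,ds, \]
with $U_{s,t},V_{s,t}$ unitary. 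Establishing this formula rigorously --- identifying the unitary twists and justifying the interchange of integrals --- is the heart of the matter, and is exactly the content deferred to Section~\ref{s:dualconv}. I expect this to be the main obstacle, with two sub-difficulties: extracting the precise convolution from the fusion rules, and handling measurability and Bochner-integrability in the \emph{non-separable} space $\sB$. The latter is cleanly circumvented by passing to the predual: rather than estimate directly in $\LRB$, one studies the dual norm $\mnorm{\cdot}$, which lives on the \emph{separable} side $\STN_1$ (cf.\ Remark~\ref{r:SOT-implies-wsmeas}), and proves that $\mnorm{\cdot}$ is an $\FAH$-module norm.

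Granting the convolution formula, \eqref{eq:W-module-norm} becomes a Young-type estimate. Since each $U_{s,t},V_{s,t}$ is unitary and $\opnorm{AB}\le\norm{A}_1\opnorm{B}$, the integrand is bounded in operator norm by $\norm{F(s)}_1\opnorm{\pi_{t-s}(h)}$; integrating over $t$ and applying Tonelli gives
\[ \wnorm{fh}=\int_\Real\opnorm{\pi_t(fh)}\,dt\le\int_\Real\!\!\int_\Real\norm{F(s)}_1\opnorm{\pi_{t-s}(h)}\,ds\,dt=\norm{F}_1\wnorm{h}=\norm{f}_\FAH\wnorm{h}, \]
which is exactly the asserted inequality. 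In the dual-norm formulation this same estimate reappears as $\mnorm{fg}\le\norm{f}_\FAH\mnorm{g}$, from which \eqref{eq:W-module-norm} follows by the duality $\wnorm{h}=\sup\{\,\abs{\int_\bbH hg}:\mnorm{g}\le1\,\}$ together with the identity $\int_\bbH(fh)g=\int_\bbH h(fg)$; either route reaches the same conclusion.

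Finally I would deduce the two numbered consequences. For (i): if $f\in\FAH$ and $h\in\sW_0$, then $f\in C_0(\bbH)$ is bounded and $h\in L^1(\bbH)$, so $fh\in L^1(\bbH)$; by \eqref{eq:W-module-norm} $\wnorm{fh}<\infty$, whence $fh\in\sW_0$, and the module axioms hold automatically since the action is genuine pointwise multiplication of functions. For (ii): for fixed $f$ the map $h\mapsto fh$ is $\wnorm{\cdot}$-bounded on $\sW_0$ with norm at most $\norm{f}_\FAH$ by \eqref{eq:W-module-norm}, hence extends uniquely and continuously to the completion $\sW$; bilinearity, the module identities, and the norm bound all pass to $\sW$ by density, and symmetry of the bimodule is immediate because pointwise multiplication is commutative. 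This produces the Banach $\FAH$-module $\sW$ required in the statement.
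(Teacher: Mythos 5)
Your high-level architecture --- pass to the Fourier side, recognise pointwise product as a twisted operator-valued convolution, and dualize if the direct estimate is unavailable --- matches the paper's strategy in outline, but the argument you actually present has a genuine gap at its centre. The schematic formula $\pi_t(fh)=\int_\Real U_{s,t}\,F(s)\,V_{s,t}\,\pi_{t-s}(h)\,ds$, with unitaries acting by left and right multiplication on $L^2(\Real)$, is not what the fusion rules produce. Since $\pi_r\tp\pi_s$ is equivalent to $\pi_{r+s}$ with \emph{infinite} multiplicity, the correct expression (Equation~\eqref{eq:explicit} and Theorem~\ref{t:explicit}) is
\[ (F\opconv G)(t)=\int_\Real (I\tp\Tr)\bigl[\,W_{r,t-r}\,(F(r)\tp G(t-r))\,W_{r,t-r}^*\,\bigr]\,dr\,, \]
i.e.\ conjugation by a unitary of $L^2(\RR)$ followed by a partial trace over the multiplicity factor; this does not collapse to a single product $UAVB$. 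Your Young-type estimate therefore needs the bound $\opnorm{(I\tp\Tr)[W(A\tp B)W^*]}\leq\norm{A}_1\opnorm{B}$ for $A\in\STN_1$ and $B\in\sB$. But when $B$ is merely bounded, $A\tp B$ is not trace class on $L^2(\RR)$, conjugation by $W_{r,t-r}$ does not preserve $\STN_1\ptp\sB$, and the slice map $I\tp\Tr$ of Lemma~\ref{trace and trace} is not even defined on the resulting operator. This is exactly the obstruction recorded in Remark~\ref{r:excuses}(ii): the direct estimate in $\LRB$ is not accessible by the crude bound you invoke, which is why the paper proves the convolution inequality only in the form $\norm{F\opconv G}_{\LIRS}\leq\norm{F}_{\LRS}\norm{G}_{\LIRS}$ (Corollary~\ref{key inequality}), where both factors are trace-class valued and everything stays inside $\cS_1(L^2(\RR))$.

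Your fallback route --- prove the module property for the dual norm $\mnorm{\cdot}$ and recover \eqref{eq:W-module-norm} by duality --- is the paper's actual proof, but as written it is only a gesture, and your stated reason for needing it (separability/measurability in $\sB$) is not the real one. Two steps require genuine work. First, the duality formula $\wnorm{h}=\sup\{\abs{\int_\bbH hg}\st\mnorm{g}\leq 1\}$ is not automatic: the paper sidesteps questions about $L^1(\Real,\sB)^*$ by testing only against compactly supported $G\in\LIRS$ with values in the unit ball of $\STN_1$, and then runs an approximation argument with simple functions, using $(\STN_1)^*=\sB$ and Bochner integrability of $t\mapsto\pi_t(hf)$, to show these test objects are norming for $\norm{\pi_\bullet(hf)}_{\LRB}$. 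Second, the identity $\int_\bbH(fh)g=\int_\bbH h(fg)$ must be mediated through the adjoint relation \eqref{eq:adjoint} so that both sides become integrals over $\widehat{\bbH}$ of traces against $G$ and against $F=(\Psi\cT)^{-1}(\check f)\opconv G$ respectively; this is where the hypothesis $h\in L^1(\bbH)$ is actually used. With those two steps supplied, your deduction of consequences (i) and (ii) from \eqref{eq:W-module-norm} is correct and routine.
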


Let us assume for now that the theorem holds, and show how it implies $\FAH$ is not weakly amenable (as claimed in Theorem~\ref{t:mainthm}).

\begin{proof}[Proof that $\FAH$ is not weakly amenable]
Since $\sW_0$ is an $\FAH$-module for pointwise product it is certainly a $\cC$-module. It is immediate from the product rule that $D_0:\cC\to \sW_0$, $f\mapsto \dd_Z(f)$, is a derivation from $\cC$ to a $\cC$-module. Moreover, by Lemma~\ref{l:bounded}, $D_0$ is continuous if we equip $\cC$ with the $\FAH$-norm and $\sW_0$ with the norm $\wnorm{\cdot}$.

By routine continuity arguments, since $\cC$ is dense in $\FAH$, there is a unique continuous linear map $D:\FAH\to \sW$ that extends $D_0$, and moreover $D$ is a derivation. It is not identically zero: for
 if $f\in\cC$ is a non-zero function, then $\dd_Z f$ is a non-zero element of $\sW_0$. Thus Theorem~\ref{t:mainthm} is proved.
\end{proof}

Our proof of Theorem~\ref{t:W-is-a-module-norm} is indirect. We shall study a norm which is dual to $\wnorm{\cdot}$, and show that this dual norm has the appropriate module property.
Then by a duality argument we will deduce the inequality \eqref{eq:W-module-norm}, after which the rest of the theorem follows easily.
Since the technical details of this part may obscure what is actually a natural and routine strategy, let us explain the underlying heuristics.

\para{Heuristics for our duality argument}
 Given $f\in \FAH$ and $h\in L^1({\mathbb H})$ and $g$ a well-behaved test function,  consider $\int_\bbH hfg \,d\bx$.
 The Plancherel theorem/Parseval formula tells us this is equal to
\begin{subequations}
\begin{equation}\label{eq:hf}
\int_{\Real} \Tr(\pi_t(hf)\pi_t(\overline{g})^*)\,|t|dt\,,
\end{equation}
and also equal to
\begin{equation}\label{eq:fg}
\int_{\Real} \Tr(\pi_t(h)\pi_t(\overline{fg})^*)\,|t|dt\,.
\end{equation}
\end{subequations}
For $u$ well-behaved, define $\mnorm{u}\defeq \esssup_{t\in\Real} |t|\norm{\pi_t(\overline{u})}_1$\/. Assume for the moment that $fg$ is also well-behaved; then \eqref{eq:hf} is bounded above by $\wnorm{hf}\mnorm{g}$ while \eqref{eq:fg} is bounded above by $\wnorm{h}\mnorm{fg}$. Now suppose we can prove the following two claims:
\begin{newnum}
\item\label{li:enough-to-test} if we take the supremum in \eqref{eq:hf} over all well-behaved test functions $g$ with $\mnorm{g}\leq 1$, we obtain the upper bound $\wnorm{hf}$;
\item\label{li:mnorm-is-dualconv}
 the new norm $\mnorm{\cdot}$ is a contractive $\FAH$-module norm, that is, $\mnorm{fg}\leq \norm{f}_{\FAH}\mnorm{g}$ for all well-behaved test functions $g$.
\end{newnum}
Then combining these two claims with the preceding remarks, we would obtain $\wnorm{hf}\leq \wnorm{h}\norm{f}_{\FAH}$ as required.

\begin{rem}
In the actual proof of Theorem~\ref{t:W-is-a-module-norm}, we do not define ``well-behaved'' test functions as elements of $L^1(\bbH)$ for which $\mnorm{\cdot}$ is finite, because of certain technical irritations that arise in verifying~\ref{li:enough-to-test}. It is more convenient, although perhaps less transparent, to take our test functions to be those of the form $\Psi\cT(G)$ where $G\in \LIRS$ has compact support, and to do norm calculations and dual pairings over on the Fourier side.
\end{rem}

The proof that $\mnorm{\cdot}$ is an $\FAH$-module norm turns out to be a very easy consequence of more general results, which describe explicitly the so-called ``dual convolution'' on $\LRS$ that corresponds to pointwise product in $\FAH$. This will be the main topic of the next section, where we will also finish the proof of Theorem~\ref{t:W-is-a-module-norm}.
\end{section}

\begin{section}{An explicit operator-valued convolution on $\LRS$}
\label{s:dualconv}
Since $\Psi\cT :\LRS\to\FAH$ is an isometric isomorphism, we may transport pointwise product on $\FAH$ over to define a commutative and associative multiplication map on $\LRS$, which we denote by $\opconv$. More precisely, given $F,G\in\LRS$ define
\[ F\opconv G = (\Psi\cT)^{-1} \left[ \Psi\cT(F)\Psi\cT(G) \right]\,.\]

In this abstract form, $\opconv$ is not new: it coincides -- modulo Remark~\ref{r:stinespring-caveat} -- with what is sometimes called ``dual convolution'' on the noncommutative $L^1$-space associated to the von Neumann algebra of a unimodular group, cf.~the definition on p.~48 of \cite[\S9]{Sti_TAMS59}. However, this abstract perspective does not seem helpful for proving that $\mnorm{\cdot}$ is an $\FAH$-module norm.
Instead, most of this section will be spent carefully deriving an explicit description of $\opconv$ as a kind of twisted convolution of operator-valued fields: see Equation \eqref{eq:explicit} and Theorem~\ref{t:explicit} below.
Once we have proved Theorem~\ref{t:explicit}, the desired inequalities will follow immediately from standard properties of the Bochner integral; and then the corresponding result for the dual norm $\wnorm{\cdot}$ will follow by a duality argument as sketched at the end of Section~\ref{s:define derivation}.

Since we hope that this ``concrete'' description of $\opconv$ may have independent interest, we treat the construction in some detail.
The starting point is the following loose idea: given $F,G\in \LRS$ we have
\[ \begin{aligned}
 \int_\Real \Tr( (F\opconv G)(t) \pi_t(\bx)^*) \, dt
& = \int_\Real\int_\Real \Tr(F(s)\pi_r(x)^*)\Tr(G(t)\pi_s(\bx)^*)\, d(r,s) \\
& = \int_\Real\int_\Real \Tr\left[ (F(r)\tp G(s) )(\pi_r\tp \pi_s)(\bx)^*\right] \, d(r,s)\,. \\
\end{aligned} \]
We then argue as follows: decompose or rewrite $\bx\mapsto \pi_r(\bx)\tp\pi_s(\bx)$ in terms of irreducible representations; rewrite the expression on the right-hand side of the formula above as a ``Fourier expansion'', and then appeal to uniqueness of Fourier coefficients in this expansion to get a reasonably explicit formula for $F\opconv G$.


Now let us make this procedure precise.
Since the eventual estimates we need for $\opconv$ depend on properties of the Bochner integral, we spend some time in this section on several small and routine results, to ensure we stay within the world of Bochner integrable functions during our construction.
 To reduce repetition we introduce the notation
\begin{equation}\label{eq:domain}
\cD \defeq \{ (r,s) \st r,s,r+s\in \Real^*\}.
\end{equation}
It is well known that when $(r,s)\in\cD$, $\pi_r\tp\pi_s$ is unitarily equivalent to a representation consisting of $\pi_{r+s}$ with infinite multiplicity.
The unitaries which implement this equivalence occur in our explicit formula for $\opconv$, so we shall now state a more precise version of this intertwining result. It seems to be implicitly known, but we did not find part~\ref{li:measurable intertwiners} explicitly stated in the sources we consulted.

\begin{prop}[Fusion rules for $\bbH$, with continuity of intertwiners]
\label{pedantic fusion}
There exists a family of unitaries $(W_{r,s})_{(r,s)\in\cD}\subset \cU(L^2(\RR))$ with the following properties:
\begin{newnum}
\item\label{li:fusion}
 $\pi_r(\bx)\tp \pi_s(\bx) = W_{r,s}^* (\pi_{r+s}(\bx)\tp I)W_{r,s}$ for all $\bx\in \bbH$;
\item\label{li:measurable intertwiners} the functions $(r,s)\mapsto W_{r,s}$ and $(r,s)\mapsto W_{r,s}^*$ are both SOT-continuous functions $\cD\to \cU(L^2(\RR))$.
\end{newnum}
\end{prop}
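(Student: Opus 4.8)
The plan is to construct $W_{r,s}$ explicitly as a linear change of variables on $L^2(\RR)$ and then read off both claims from the formula. First I would write out the action of $\pi_r\tp\pi_s$ on $L^2(\RR)=L^2(\Real)\tp L^2(\Real)$. Using \eqref{eq:define schroedinger}, for $F\in L^2(\RR)$ and $\bx=(x,y,z)$ one computes
\[ (\pi_r(\bx)\tp\pi_s(\bx))F(w_1,w_2) = e^{2\pi i(r+s)z}\,e^{\pi i(r+s)yx}\,e^{-2\pi iy(rw_1+sw_2)}\,F(w_1-x,w_2-x). \]
The key observation is that the phase depends on $(w_1,w_2)$ only through the combination $rw_1+sw_2=(r+s)u$, where $u:=(rw_1+sw_2)/(r+s)$, while the translation $w_i\mapsto w_i-x$ fixes $v:=w_1-w_2$ and sends $u\mapsto u-x$. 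This suggests passing to the coordinates $(u,v)$.

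Accordingly, for $(r,s)\in\cD$ I would define the invertible linear map $\Phi_{r,s}:\RR\to\RR$ by $\Phi_{r,s}(w_1,w_2)=\bigl((rw_1+sw_2)/(r+s),\,w_1-w_2\bigr)$, whose Jacobian determinant is identically $-1$ (this uses $r+s\neq 0$, which is exactly why $\cD$ excludes the anti-diagonal). Since $\Phi_{r,s}$ is then measure-preserving, $W_{r,s}F:=F\circ\Phi_{r,s}^{-1}$ defines a unitary on $L^2(\RR)$, with adjoint $W_{r,s}^*G=G\circ\Phi_{r,s}$. A direct substitution, using that $\Phi_{r,s}$ intertwines $w_i\mapsto w_i-x$ with the partial translation $u\mapsto u-x$ (leaving $v$ fixed) and that the phase above becomes $e^{-2\pi i(r+s)yu}$, shows that $W_{r,s}(\pi_r(\bx)\tp\pi_s(\bx))W_{r,s}^*=\pi_{r+s}(\bx)\tp I$ for all $\bx$; rearranging gives \ref{li:fusion}. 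Here $I$ is the identity on the $L^2(\Real)$-factor carrying the $v$-variable, which accounts for the infinite multiplicity.

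For \ref{li:measurable intertwiners}, the entries of $\Phi_{r,s}$ and of $\Phi_{r,s}^{-1}$ are rational in $(r,s)$ with denominator $r+s$ nonvanishing on $\cD$, hence depend continuously on $(r,s)\in\cD$. Since every $W_{r,s}$ is a contraction (indeed a unitary), SOT-continuity at a point $(r_0,s_0)$ need only be checked on the dense subspace $C_c(\RR)$. For $F\in C_c(\RR)$ the functions $F\circ\Phi_{r,s}^{-1}$ are supported in a common compact set for $(r,s)$ near $(r_0,s_0)$ and converge uniformly to $F\circ\Phi_{r_0,s_0}^{-1}$ (by uniform continuity of $F$ and continuity of $\Phi_{r,s}^{-1}$ in the parameters), hence converge in $L^2$-norm. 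The same argument applied to $\Phi_{r,s}$ handles $W_{r,s}^*$.

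The substitution verifying \ref{li:fusion} and the uniform-continuity estimate are routine and I would relegate each to a line or two. The only real subtlety, and presumably why the authors flag part~\ref{li:measurable intertwiners} as not explicitly in the literature, is the passage from parameter-continuity of the change of variables to SOT-continuity of the associated composition operators; this is precisely where uniform boundedness of the $W_{r,s}$ together with the reduction to $C_c(\RR)$ does the work, so I would isolate that density-plus-equicontinuity reduction cleanly rather than attempt a direct norm estimate on all of $L^2(\RR)$.
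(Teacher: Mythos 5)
Your proposal is correct and follows essentially the same route as the paper: the paper's $W_{r,s}$ is likewise a composition operator implementing an explicit linear change of variables on $\RR$ (your $\Phi_{r,s}$ agrees with the paper's choice up to a reflection in the second coordinate, which is harmless since it commutes with $\pi_{r+s}\tp I$), and the intertwining identity is verified by the same direct substitution using \eqref{eq:define schroedinger}. For part \ref{li:measurable intertwiners} the paper's displayed argument is the slicker factorization $W_{r,s}=\sigma(\gamma(r,s))$ through the WOT-continuous quasi-regular representation of $\SL(2,\Real)$, but it explicitly notes that a direct estimate --- which is exactly your uniform-boundedness-plus-density reduction --- works equally well.
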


\begin{proof}
For $(r,s)\in\cD$, define $W_{r,s}: L^2(\RR)\to L^2(\RR)$ by
\begin{equation}\label{eq:out-of-hat}
W_{r,s} F(h,k) = F\left( h-\frac{ks}{r+s} \,, h+k - \frac{ks}{r+s} \right)\qquad(F\in L^2(\RR), h,k\in \Real).
\end{equation}
A little thought shows $W_{r,s}$ is unitary. Using \eqref{eq:define schroedinger}, one may verify by direct calculation that
\[ W_{r,s}(\pi_r(\bx)\tp \pi_s(\bx)) (f\tp g) = (\pi_{r+s}(\bx)\tp I ) W_{r,s} (f\tp g) \]
for all $f,g\in L^2(\Real)$ and all $\bx\in \bbH$. So by density, $W_{r,s}$ intertwines $\pi_r\tp\pi_s$ with $\pi_{r+s}\otimes I$.

It remains to prove WOT-continuity, and hence, SOT-continuity, of the function $\cD\to\cU(L^2(\RR))$, $(r,s)\mapsto W_{r,s}$. This can be verified with a direct calculation and basic estimates. Alternatively, as pointed out by the referee of this article, there is a slicker approach.
Let $\gamma:\cD \to \SL(2,\Real)$ be the continuous function
\[ \gamma(r,s) = \twomat{\frac{r}{r+s}}{\frac{s}{r+s}}{-1}{1}\ . \]
With an obvious and harmless abuse of notation, we have
\[ W_{r,s} F\twoc{h}{k} = F \left(\gamma(r,s)^{-1}\twoc{h}{k}\right). \]
Let $\sigma:\SL(2,\Real) \to \cU(L^2(\RR))$ be defined by
$[\sigma(A)f]({\bf v}) = f(A^{-1}{\bf v})$. This is a WOT-continuous unitary representation of $\SL(2,\Real)$ (to be precise, it is the \dt{quasi-regular representation} arising from the subgroup $\SO(2,\Real)$). Then $\sigma\circ\gamma:\cD\to \cU(L^2(\RR))$ is WOT-continuous and $W_{r,s}=\sigma(\gamma(r,s))$ for all $(r,s)\in\cD$.
\end{proof}

\begin{rem}\label{r:out-of-hat}
For sake of brevity we omitted any explanation of how one arrives at the formula~\eqref{eq:out-of-hat}. In fact it can be derived as a special case of general results on tensoring induced representations, cf.~Remark~\ref{r:sch-as-induced}.
See Section~2.8 of \cite{KaniuthTaylor} for an accessible exposition of these techniques.
\end{rem}

\begin{lem}\label{measurability lemma}
Let $\cK$ be a separable Hilbert space, and let $\cS_1(\cK)$ be the space of trace-class operators on $\cK$.
Let $(\Omega,\mu)$ be a $\sigma$-finite measure space.
 Suppose $F:\Omega\to \cS_1(\cK)$ is measurable and $V:\Omega\to \Bdd(\cK)$ is weak-star measurable. Then the function $F\cdot V: \Omega\to\cS_1(\cK)$, $\omega\mapsto F(\omega)V(\omega)$, is measurable.
\end{lem}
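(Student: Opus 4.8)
The plan is to invoke the Pettis measurability theorem. Since $\cK$ is separable, the Banach space $\cS_1(\cK)$ is separable (finite-rank operators with Gaussian-rational matrix entries in a fixed orthonormal basis of $\cK$ are dense in trace norm), so every $\cS_1(\cK)$-valued function is essentially separably valued. By Pettis's theorem (see \cite[Chapter~2]{DiestelUhl_book}) such a function is (strongly) measurable if and only if it is weakly measurable. First I would record the elementary point that $F\cdot V$ is genuinely $\cS_1(\cK)$-valued: trace class is a two-sided ideal in $\Bdd(\cK)$, so $F(\omega)V(\omega)\in\cS_1(\cK)$ with $\norm{F(\omega)V(\omega)}_1\le\norm{F(\omega)}_1\opnorm{V(\omega)}$. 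Identifying $\cS_1(\cK)^*=\Bdd(\cK)$ via $B\mapsto\bigl(S\mapsto\Tr(SB)\bigr)$, weak measurability of $F\cdot V$ amounts to measurability of the scalar function $\omega\mapsto\Tr(F(\omega)V(\omega)B)$ for each fixed $B\in\Bdd(\cK)$.

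The key difficulty is that $V$ is assumed only weak-star measurable, not measurable, so one cannot approximate $V$ itself by simple functions; the weak-star hypothesis only permits testing $V$ against fixed trace-class operators, i.e.\ against the predual $\Bdd(\cK)_*=\cS_1(\cK)$. The device that resolves this is to exploit the (strong) measurability of $F$ together with cyclicity of the trace. Fixing $B\in\Bdd(\cK)$, I would choose simple functions $F_n=\sum_i {\bf 1}_{E_i^{(n)}} T_i^{(n)}$, with $T_i^{(n)}\in\cS_1(\cK)$, converging pointwise to $F$ in trace norm. Using $\abs{\Tr(SA)}\le\norm{S}_1\opnorm{A}$ one gets, for each fixed $\omega$,
\[ \abs{\Tr(F_n(\omega)V(\omega)B)-\Tr(F(\omega)V(\omega)B)}\le\norm{F_n(\omega)-F(\omega)}_1\,\opnorm{V(\omega)}\,\opnorm{B}\longrightarrow 0, \]
so $\omega\mapsto\Tr(F(\omega)V(\omega)B)$ is the pointwise limit of the functions $\omega\mapsto\Tr(F_n(\omega)V(\omega)B)$, and it suffices to prove each of the latter is measurable.

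After multiplying by the (measurable) indicators ${\bf 1}_{E_i^{(n)}}$, this reduces by linearity to showing $\omega\mapsto\Tr(TV(\omega)B)$ is measurable for a single fixed $T\in\cS_1(\cK)$. Here cyclicity of the trace gives $\Tr(TV(\omega)B)=\Tr\bigl(V(\omega)\,BT\bigr)$, and $S:=BT$ again lies in $\cS_1(\cK)$ by the ideal property. Thus $\omega\mapsto\Tr(V(\omega)S)$ is exactly the pairing of $V$ against the fixed element $S$ of the predual $\cS_1(\cK)=\Bdd(\cK)_*$, which is measurable precisely because $V$ is weak-star measurable. Tracing back through the finite sums and the pointwise limit shows $\omega\mapsto\Tr(F(\omega)V(\omega)B)$ is measurable for every $B$, whence $F\cdot V$ is weakly measurable and therefore measurable.

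I expect the only genuine subtlety to be the bookkeeping around which form of measurability (weak, strong, or weak-star) is available for which factor and at which stage; the actual mathematical content is just the ideal property of $\cS_1(\cK)$, cyclicity of the trace, and the equivalence of weak and strong measurability for separably-valued functions.
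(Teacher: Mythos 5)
Your proposal is correct and is essentially the paper's own argument: both rest on the Pettis measurability theorem (using separability of $\cS_1(\cK)$), approximation of $F$ by simple functions, the ideal property of $\cS_1(\cK)$, and cyclicity of the trace to reduce everything to the pairing of $V$ against a fixed element of the predual. The only difference is cosmetic --- the paper first reduces to $F=\chi_E\tp A$ and then applies Pettis to the resulting product, whereas you apply Pettis to $F\cdot V$ directly and then pass to the limit in the scalar functions --- so there is nothing to add.
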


\begin{proof}
Since the pointwise limit of a sequence of measurable functions is measurable, and since finite sums of measurable functions are measurable, it 
suffices to prove this result in the special case where $F=\chi_E\tp A$ for some measurable $E\subseteq\Omega$ and $A\in \cS_1(\cK)$.
Then, since the function $FV$ takes values in a \emph{separable} Banach space, it suffices by the Pettis measurability theorem
(see Theorem 2 in \cite[\S II.1]{DiestelUhl_book} or Theorem 1.1.1 in \cite{ABHN_book})
 to show that for each $B\in\Bdd(\cK)$ the function $\omega\mapsto \chi_E(\omega)\Tr(BAV(\omega))$ is measurable. But this is now obvious since we assumed $\omega\mapsto V(\omega)$ is weak-star measurable.
\end{proof}

\begin{lem}[Slicing with a trace in the second variable]
\label{trace and trace}
Let $\cH$ and $\cK$ be Hilbert spaces.
There is a well-defined, contractive linear map $I\tp\Tr: \cS_1(\cH \tp_2\cK) \to \cS_1(\cH)$ which sends $A\tp B$ to $\Tr(B)A$ whenever $A\in\cS_1(\cH), B\in\cS_1(\cK)$. Moreover, if $R\in\cS_1(\cH\tp_2\cK)$, 
let $R_1 = (I\tp \Tr)(R)$. Then for any $C\in \Bdd(\cH)$ we have $\Tr[CR_1] = \Tr[ (C\tp I)R]$.
\end{lem}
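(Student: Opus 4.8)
The plan is to recognize $I\tp\Tr$ as the \emph{partial trace} over the second tensor factor, and to construct it as the pre-adjoint of the ampliation $C\mapsto C\tp I$, so that the stated adjoint relation becomes the \emph{defining} property of the map rather than a separate claim. Recall that the trace pairing $\ip{S}{C}=\Tr(CS)$ identifies $\Bdd(\mathcal{M})$ isometrically with $\cS_1(\mathcal{M})^*$ for any Hilbert space $\mathcal{M}$, and that $\norm{S}_1=\sup\{\,|\Tr(CS)| : C\in\Bdd(\mathcal{M}),\ \opnorm{C}\le1\,\}$ for $S\in\cS_1(\mathcal{M})$. Consequently the operator $R_1$ is forced: it must be the unique element of $\cS_1(\cH)$, if one exists, satisfying $\Tr(CR_1)=\Tr((C\tp I)R)$ for every $C\in\Bdd(\cH)$. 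Thus the real content of the lemma is the \emph{existence} of such an $R_1$ together with the contractive bound, and the ``moreover'' identity will then hold by construction.

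To produce $R_1$, I would work on the subspace $\cS_1(\cH)\otimes\cS_1(\cK)$ of $\cS_1(\cH\tp_2\cK)$ spanned by elementary tensors, whose $\norm{\cdot}_1$-density is standard (via the singular-value decomposition of a trace-class operator and approximation of its singular vectors by finite sums of product vectors). On this subspace the bilinear assignment $(A,B)\mapsto\Tr(B)A$ induces, by the universal property of the algebraic tensor product, a well-defined linear map $T$ with $T(A\tp B)=\Tr(B)A$. For $R=\sum_i A_i\tp B_i$ and any $C\in\Bdd(\cH)$ one computes $\Tr(C\,T(R))=\sum_i\Tr(CA_i)\Tr(B_i)=\sum_i\Tr((C\tp I)(A_i\tp B_i))=\Tr((C\tp I)R)$, which is the target identity on a dense set. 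Taking the supremum over $\opnorm{C}\le1$ and using $\opnorm{C\tp I}=\opnorm{C}$ together with $|\Tr(XY)|\le\opnorm{X}\norm{Y}_1$ yields $\norm{T(R)}_1\le\norm{R}_1$, so $T$ extends uniquely to a contraction $I\tp\Tr:\cS_1(\cH\tp_2\cK)\to\cS_1(\cH)$.

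Finally I would transfer the adjoint relation to all of $\cS_1(\cH\tp_2\cK)$: for each fixed $C$, both $R\mapsto\Tr(C\,(I\tp\Tr)(R))$ and $R\mapsto\Tr((C\tp I)R)$ are $\norm{\cdot}_1$-continuous and agree on a dense subspace, hence agree everywhere, giving exactly the ``moreover'' statement with $R_1=(I\tp\Tr)(R)$. The elementary-tensor formula and contractivity are built into the construction. I expect the only points needing care --- and hence the main, though routine, obstacle --- to be the $\norm{\cdot}_1$-density of $\cS_1(\cH)\otimes\cS_1(\cK)$ in $\cS_1(\cH\tp_2\cK)$ and the well-definedness of $T$ on it; in particular one must resist conflating this algebraic tensor product with the full space of finite-rank operators on $\cH\tp_2\cK$, of which it is only a (dense) proper subspace.
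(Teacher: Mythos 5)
Your proof is correct, but it takes a genuinely different route from the paper's. The paper reduces the lemma to showing that the slice map on $\cS_1(\cH)\ptp\cS_1(\cK)$ extends boundedly to $\cS_1(\cH\tp_2\cK)$, and obtains this by citing the operator-space identification $\cS_1(\cH\tp_2\cK)\iso\cS_1(\cH)\optp\cS_1(\cK)$; a following remark offers a hands-on alternative, expanding a rank-one operator $u\tp v$ on $\ell^2(\bbI\times\bbJ)$ into the slices $u_{\blob,j}\tp v_{\blob,j}$ and controlling the sum of their trace norms by Cauchy--Schwarz. You instead realize $I\tp\Tr$ as the pre-adjoint of the ampliation $C\mapsto C\tp I$: contractivity falls out of $\norm{S}_1=\sup_{\opnorm{C}\leq 1}\abs{\Tr(CS)}$ together with $\opnorm{C\tp I}=\opnorm{C}$ and $\abs{\Tr(XY)}\leq\opnorm{X}\norm{Y}_1$, and the ``moreover'' identity is built into the construction rather than checked afterwards. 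This is arguably the cleanest of the three arguments: it avoids both operator-space machinery and the explicit rank-one computation, at the cost of verifying the trace-norm density of the algebraic tensor product $\cS_1(\cH)\otimes\cS_1(\cK)$ in $\cS_1(\cH\tp_2\cK)$ and the well-definedness of $T$ on it --- both of which you correctly isolate as the only delicate points, and both of which are routine (singular-value decomposition plus approximation of the singular vectors by finite sums of elementary tensors). One small slip in your closing aside: $\cS_1(\cH)\otimes\cS_1(\cK)$ is \emph{not} a subspace of the finite-rank operators on $\cH\tp_2\cK$ --- an elementary tensor $A\tp B$ with $A$ and $B$ of infinite rank already lies outside that space --- so the two subspaces are incomparable rather than one containing the other; this does not affect your argument, since the density you actually use is that of the algebraic tensor product in $\cS_1(\cH\tp_2\cK)$.
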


\begin{proof}
It suffices to prove that $I\tp\Tr : \cS_1(\cH)\ptp \cS_1(\cK)\to\cS_1(\cH)$ extends boundedly to the larger domain $\cS_1(\cH\tp_2\cK)$. Once this is done, the rest follows by checking the putative identities on suitable dense subspaces and extending by continuity. But the boundedness result is an easy consequence of the known identification $\cS_1(\cH\tp_2\cK)$ with $\cS_1(\cH)\optp\cS_1(\cK)$, where $\optp$ denotes the projective tensor product in the category of operator spaces and completely bounded maps.
(For an explanation and proof of this identification, see e.g.~\cite[Proposition 7.2.1]{effros-ruan-book}.)
\end{proof}

\begin{rem}\label{rem:slice by hand}
One can replace the appeal to operator-space techniques with a direct argument as follows.
Given index sets $\bbI$ and $\bbJ$ and vectors $u,v\in \ell^2(\bbI\times\bbJ)$, form the rank-one operator $u\tp v\in\cS_1(\ell^2(\bbI\times\bbJ))$. Then 
$(I\tp\Tr)(u\tp v)$ can be identified with $\sum_{j\in\bbJ} u_{\blob,j} \tp v_{\blob,j}$\/, and for each $j\in\bbJ$ the trace-class norm of $u_{\blob,j}\tp v_{\blob,j}$ in $\cS_1(\ell^2(\bbI))$ is bounded above by $\norm{u_{\blob,j}}_2 \norm{v_{\blob,j}}_2$. By Cauchy--Schwarz, $\sum_j \norm{u_{\blob,j}}_2 \norm{v_{\blob,j}}_2 \leq \norm{u}_2\norm{v}_2$\/, and so $(I\tp \Tr)(u\tp v)$ is trace class with control of the trace-class norm.
\end{rem}

We are now ready to give our description of $\opconv$. We proceed using two more lemmas.

\begin{lem}
Let $F$ and $G$ be Bochner integrable functions $\Real\to \STN_1$. Then the function
\begin{equation}\label{eq:smeared}
 \theta_1( F \tp G)(r,s) = 
\begin{cases}
 (I\tp\Tr)[W_{r,s}(F(r)\tp G(s))W_{r,s}^* ] & \quad\text{for all $(r,s)\in\cD$,} \\
0 & \quad\text{otherwise},
\end{cases}
\end{equation}
is Bochner integrable. The map $\theta_1: L^1(\Real,\STN_1)\ptp L^1(\Real,\STN_1)\to L^1(\RR, \STN_1)$ is contractive and linear.
\end{lem}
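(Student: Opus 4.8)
The plan is to treat the two assertions separately. First I would show that for a single elementary tensor the field $\theta_1(F\tp G)$ lies in $L^1(\RR,\STN_1)$ with $\norm{\theta_1(F\tp G)}_{L^1(\RR,\STN_1)}\le\norm{F}_1\norm{G}_1$, and only then pass to the projective tensor product by its universal property. Throughout I would invoke the characterization of Bochner integrability recalled in Subsection~\ref{ss:vector valued}: a function into a Banach space is Bochner integrable exactly when it is strongly measurable and its norm is integrable. Thus the argument splits into an easy norm estimate and a more delicate measurability check.

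For the estimate, fix $(r,s)\in\cD$. Conjugation by the unitary $W_{r,s}$ is an isometry of $\cS_1(L^2(\RR))$, so $\norm{W_{r,s}(F(r)\tp G(s))W_{r,s}^*}_1=\norm{F(r)\tp G(s)}_1=\norm{F(r)}_1\norm{G(s)}_1$; since $I\tp\Tr$ is contractive by Lemma~\ref{trace and trace}, this gives the pointwise bound $\norm{\theta_1(F\tp G)(r,s)}_1\le\norm{F(r)}_1\norm{G(s)}_1$. As $\cD$ is conull in $\RR$ (its complement lies in the three lines $r=0$, $s=0$, $r+s=0$), Tonelli's theorem gives $\int_\RR\norm{\theta_1(F\tp G)(r,s)}_1\,d(r,s)\le\norm{F}_1\norm{G}_1$. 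This yields integrability of the norm and, once measurability is in hand, the claimed contractivity.

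The substantive part is strong measurability of $(r,s)\mapsto\theta_1(F\tp G)(r,s)$, which I would obtain by exhibiting it as a composition of measurable building blocks on the conull set $\cD$, extended by $0$ across the null complement. First, $(r,s)\mapsto F(r)$ and $(r,s)\mapsto G(s)$ are strongly measurable $\RR\to\STN_1$ (precompose $F$, $G$ with the coordinate projections), and since the bilinear map $(A,B)\mapsto A\tp B$ is continuous $\STN_1\times\STN_1\to\cS_1(L^2(\RR))$ with $L^2(\RR)$ separable, $(r,s)\mapsto F(r)\tp G(s)$ is strongly measurable. By Proposition~\ref{pedantic fusion}\ref{li:measurable intertwiners} the fields $W_{r,s}$ and $W_{r,s}^*$ are SOT-continuous on $\cD$, hence weak-star measurable by Remark~\ref{r:SOT-implies-wsmeas}, so Lemma~\ref{measurability lemma} shows $B(r,s):=(F(r)\tp G(s))W_{r,s}^*$ is measurable. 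The one point needing care, and where I expect the main obstacle, is that Lemma~\ref{measurability lemma} only delivers \emph{right} products, whereas the conjugation requires a further multiplication by $W_{r,s}$ on the \emph{left}. I would resolve this with an adjoint trick: write $R(r,s):=W_{r,s}B(r,s)=\bigl(B(r,s)^*W_{r,s}^*\bigr)^*$; since $C\mapsto C^*$ is an isometric conjugate-linear self-map of $\cS_1(L^2(\RR))$ it preserves strong measurability, and $B(r,s)^*W_{r,s}^*$ is again a right product covered by Lemma~\ref{measurability lemma}, so $R$ is measurable. Composing with the bounded linear map $I\tp\Tr$ of Lemma~\ref{trace and trace} (which preserves strong measurability) then shows $\theta_1(F\tp G)=(I\tp\Tr)\circ R$ is strongly measurable on $\cD$, and extending by $0$ gives strong measurability on $\RR$.

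Finally I would observe that $(F,G)\mapsto\theta_1(F\tp G)$ is bilinear, immediate from bilinearity of $\tp$ and linearity of conjugation and of $I\tp\Tr$, and bounded with norm at most $1$ by the estimate above. The universal property of the projective tensor product then yields a unique linear extension $\theta_1:L^1(\Real,\STN_1)\ptp L^1(\Real,\STN_1)\to L^1(\RR,\STN_1)$ of the same norm, which is precisely the contractive linear map asserted.
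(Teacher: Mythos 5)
Your proposal is correct and follows essentially the same route as the paper: the pointwise norm bound via unitary invariance of $\norm{\cdot}_1$ and contractivity of $I\tp\Tr$, measurability via Proposition~\ref{pedantic fusion}, Remark~\ref{r:SOT-implies-wsmeas} and Lemma~\ref{measurability lemma}, and then extension through the projective tensor product. Your adjoint trick for the left multiplication by $W_{r,s}$ is a nice explicit fix for a point the paper passes over silently when it invokes ``two applications'' of Lemma~\ref{measurability lemma}, which as stated only covers right products.
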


\begin{proof}
Let $F$ and $G$ be Bochner integrable functions $\Real\to\STN_1$ (so, in particular, measurable functions).
Then $F\tp G$ is measurable when viewed as a map $\RR\to \STN_1\ptp\STN_1 \subset \cS_1(L^2(\RR))$.
Recall (see Remark~\ref{r:SOT-implies-wsmeas}) that SOT-continuous functions $\cD\to\Bdd(L^2(\RR))$ are weak-star measurable.  So combining Proposition~\ref{pedantic fusion} with two applications of Lemma \ref{measurability lemma} shows that
\[ (r,s) \mapsto W_{r,s}(F(r)\tp G(s))W_{r,s}^* \]
is measurable as a function $\cD\to \cS_1(L^2(\RR))$. Slicing with $I\tp \Tr$ we conclude that the right-hand side of \eqref{eq:smeared} is measurable. Moreover,
\[ \begin{aligned}
& \int_{\RR} \norm{  (I\tp\Tr)[W_{r,s}(F(r)\tp G(s))W_{r,s}^* ]}_1 \,d(r,s) \\
 \leq & \int_{\RR} \norm{F(r)}_1 \norm{G(s)}_1 \,d(r,s) = \norm{F}_{\LRS}\norm{G}_{\LRS}< \infty\,,
\end{aligned}
\]
and thus the right-hand side of \eqref{eq:smeared} is Bochner integrable.
The final claim about linearity and contractivity of $\theta_1$ is then routine book-keeping.
\end{proof}

The next lemma is a special case of a standard construction for vector-valued~$L^1$, but we include the statement explicitly for sake of clarity.
\begin{lem}[Vector-valued convolution]
Let 
$K:\RR\to \STN_1$, 
be Bochner integrable. Then
\[ \theta_2(K)(t) \defeq  \int_{\Real} K(p,t-p) \,dp \]
exists for a.e.~$t\in\Real$, and is Bochner integrable as a function $\Real\to\STN_1$.
Moreover,
\begin{equation}\label{eq:bloody house party}
  \norm{\theta_2(K)(t)}_1 \leq \int_{\Real} \norm{K(p,t-p)}_1 \,dp 
\quad\text{for a.e.\ $t\in\Real$},
\end{equation}
and
\[ \begin{aligned}
\norm{\theta_2(K)}_{\LRS}
& = \int_{\Real} \norm{\theta_2(K)(t)}_1  \,dt \\
& \leq \int_{\Real^2} \norm{K(p,t-p)}_1 \,d(p,t) = \norm{K}_{L^1(\RR,\STN_1)} \,,
\end{aligned}
\]
so that $\theta_2$ is a well-defined linear contraction $L^1(\RR,\STN_1)\to L^1(\Real,\STN_1)$.
\end{lem}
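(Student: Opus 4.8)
The plan is to reduce the whole statement to the scalar Fubini--Tonelli theorem and the vector-valued Fubini theorem for the Bochner integral, after first absorbing the shear $(p,t)\mapsto(p,t-p)$ into a measure-preserving change of variables. Let $\Phi:\RR\to\RR$ be the linear map $\Phi(p,t)=(p,t-p)$; its matrix has determinant $1$, so $\Phi$ preserves Lebesgue measure on $\Real^2$ and carries null sets to null sets in both directions. The first step is to check that $\wtild{K}\defeq K\circ\Phi$, i.e.\ $\wtild{K}(p,t)=K(p,t-p)$, is again strongly measurable: writing $K$ as an a.e.\ pointwise limit of $\STN_1$-valued simple functions $K_n$, each $K_n\circ\Phi$ is simple (because $\Phi$ is a linear isomorphism, hence bi-measurable), and $K_n\circ\Phi\to K\circ\Phi$ off the $\Phi$-preimage of the exceptional null set, which is again null. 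Since $\norm{\wtild{K}(p,t)}_1=\norm{K(\Phi(p,t))}_1$, the scalar change of variables gives $\int_{\RR}\norm{\wtild{K}(p,t)}_1\,d(p,t)=\int_{\RR}\norm{K(p,t)}_1\,d(p,t)=\norm{K}_{L^1(\RR,\STN_1)}<\infty$, so by the integrability criterion recalled in Subsection~\ref{ss:vector valued}, $\wtild{K}$ is Bochner integrable with $\norm{\wtild{K}}_{L^1(\RR,\STN_1)}=\norm{K}_{L^1(\RR,\STN_1)}$.

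Next I would apply the Fubini theorem for the Bochner integral to $\wtild{K}$ (this is standard; see e.g.\ \cite[Chapter 1]{ABHN_book}). This yields at once that for a.e.\ $t\in\Real$ the slice $p\mapsto\wtild{K}(p,t)=K(p,t-p)$ is Bochner integrable over $\Real$, so that $\theta_2(K)(t)=\int_\Real K(p,t-p)\,dp$ exists for a.e.\ $t$, and moreover that the resulting function $t\mapsto\theta_2(K)(t)$ is itself Bochner integrable as a map $\Real\to\STN_1$.

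The two norm estimates then drop out. The pointwise bound \eqref{eq:bloody house party} is exactly the fundamental inequality $\norm{\int_\Real\wtild{K}(p,t)\,dp}_1\leq\int_\Real\norm{\wtild{K}(p,t)}_1\,dp$ for the Bochner integral, valid for a.e.\ $t$ by the previous step. Integrating this bound in $t$ and applying the scalar Tonelli theorem to the nonnegative measurable function $(p,t)\mapsto\norm{K(p,t-p)}_1$ gives
\[ \norm{\theta_2(K)}_{\LRS}\leq\int_{\RR}\norm{K(p,t-p)}_1\,d(p,t)=\norm{K}_{L^1(\RR,\STN_1)}, \]
the last equality being the scalar change of variables under $\Phi$ once more. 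Linearity of $\theta_2$ is clear from linearity of the Bochner integral, and the displayed inequality is precisely the asserted contractivity.

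I do not expect a serious obstacle here; the statement is, as noted, a special case of a standard construction. The only points that deserve a little care are the two measurability claims: that the shear composition $K\circ\Phi$ remains \emph{strongly} measurable (handled above via simple functions and the measure-preservation of $\Phi$), and that the $p$-slices are Bochner integrable for a.e.\ $t$ (which is part of the conclusion of the vector-valued Fubini theorem, so needs no separate argument). Everything else is the scalar Fubini--Tonelli theorem together with the triangle inequality for the Bochner integral and the Pettis-type facts collected in Subsection~\ref{ss:vector valued}.
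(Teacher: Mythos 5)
Your proposal is correct and follows essentially the same route as the paper, whose entire proof is the one-line remark that the lemma is ``a standard application of the vector-valued Fubini theorem'' (citing Theorem~1.1.9 of the Arendt--Batty--Hieber--Neubrander book). You have simply written out the details that the paper leaves implicit --- the measure-preserving shear, strong measurability of the composition, the triangle inequality for the Bochner integral, and scalar Tonelli --- and all of these steps are sound.
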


\begin{proof}
This follows by a standard application of the vector-valued Fubini theorem.
(For instance, apply~\cite[Theorem 1.1.9]{ABHN_book}.)
\end{proof}

\para{The explicit formula for $\opconv$}
Consider $\theta_2\theta_1: L^1(\Real,\STN_1)\ptp L^1(\Real,\STN_1)\to L^1(\Real,\STN_1)$. Now that we have taken care of all measurability issues, we can describe this as follows:
\begin{equation}\label{eq:explicit}
\theta_2\theta_1(F\tp G)(t) = \int_{\Real} (I\tp \Tr)[W_{r,t-r} (F(r)\tp G(t-r) ) W_{r,t-r}^*] \,dr \qquad\text{(a.e.\ $t\in\Real$)},
\end{equation}
where the integral on the right hand side of \eqref{eq:explicit} is a Bochner integral.

\begin{thm}\label{t:explicit}
$F\opconv G = \theta_2\theta_1(F\tp G)$ for all $F,G\in\LRS$.
\end{thm}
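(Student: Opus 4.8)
The plan is to use that $\Psi\cT$ is injective (Theorem~\ref{t:F-inversion-for-AH}): since by definition $\Psi\cT(F\opconv G)=\Psi\cT(F)\,\Psi\cT(G)$, the asserted identity is equivalent to the single equation $\Psi\cT(\theta_2\theta_1(F\tp G))=\Psi\cT(F)\,\Psi\cT(G)$ of functions on $\bbH$. First I would fix $F,G\in\LRS$ and $\bx\in\bbH$ and expand the pointwise product directly from the formula \eqref{eq:tweaked}. Because each $\pi_r(\bx)$ is unitary we have $\abs{\Tr[F(r)\pi_r(\bx)^*]}\le\norm{F(r)}_1$, so both factors are absolutely integrable and Fubini gives
\[ \Psi\cT(F)(\bx)\,\Psi\cT(G)(\bx) = \int_\Real\int_\Real \Tr[F(r)\pi_r(\bx)^*]\,\Tr[G(s)\pi_s(\bx)^*]\,dr\,ds. \]

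Next I would run the algebraic core. The scalar identity $\Tr[A]\Tr[B]=\Tr[A\tp B]$ rewrites the integrand as $\Tr[(F(r)\tp G(s))(\pi_r(\bx)\tp\pi_s(\bx))^*]$. On the set $\cD$, whose complement in $\RR$ is a Lebesgue-null union of lines, part~\ref{li:fusion} of Proposition~\ref{pedantic fusion} gives $(\pi_r(\bx)\tp\pi_s(\bx))^*=W_{r,s}^*(\pi_{r+s}(\bx)^*\tp I)W_{r,s}$; moving $W_{r,s}$ and $W_{r,s}^*$ onto the other factor by cyclicity of the trace turns the integrand into $\Tr[(C\tp I)R]$ with $C=\pi_{r+s}(\bx)^*$ and $R=W_{r,s}(F(r)\tp G(s))W_{r,s}^*$. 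Lemma~\ref{trace and trace} then collapses this to $\Tr[C\,(I\tp\Tr)(R)]$, which by the definition \eqref{eq:smeared} of $\theta_1$ is exactly $\Tr[\theta_1(F\tp G)(r,s)\,\pi_{r+s}(\bx)^*]$. Thus the product has become $\int_{\RR}\Tr[\theta_1(F\tp G)(r,s)\,\pi_{r+s}(\bx)^*]\,d(r,s)$, an absolutely convergent integral since $\int_{\RR}\norm{\theta_1(F\tp G)(r,s)}_1\,d(r,s)\le\norm{F}_{\LRS}\norm{G}_{\LRS}$.

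To conclude I would apply the measure-preserving change of variables $(r,s)\mapsto(r,t)$ with $t=r+s$ and write the result as an iterated integral,
\[ \int_\Real\left(\int_\Real \Tr[\theta_1(F\tp G)(r,t-r)\,\pi_t(\bx)^*]\,dr\right)\,dt. \]
Since $R\mapsto\Tr[R\,\pi_t(\bx)^*]$ is a bounded linear functional on $\STN_1$, and bounded functionals commute with the Bochner integral, the inner integral equals $\Tr\bigl[\bigl(\int_\Real\theta_1(F\tp G)(r,t-r)\,dr\bigr)\pi_t(\bx)^*\bigr]=\Tr[\theta_2\theta_1(F\tp G)(t)\,\pi_t(\bx)^*]$ by the definition of $\theta_2$ together with \eqref{eq:explicit}. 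Integrating in $t$ and comparing with \eqref{eq:tweaked} yields $\Psi\cT(\theta_2\theta_1(F\tp G))(\bx)$, as wanted; injectivity of $\Psi\cT$ then gives the theorem.

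I expect the algebraic steps (the tensor identity for traces, cyclicity, and the slice of Lemma~\ref{trace and trace}) to be immediate once the fusion rule is in place, so the main obstacle is purely the bookkeeping of the integral manipulations: justifying the opening Fubini, confirming absolute convergence of the double integral over $\RR$ before changing variables, and the one genuinely non-formal point, namely interchanging the scalar trace-pairing with the Bochner integral that defines $\theta_2$. All the measurability that legitimizes these steps has already been established in the lemmas preceding \eqref{eq:explicit}, so no further measurability arguments are needed.
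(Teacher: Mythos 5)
Your proposal is correct and is essentially the paper's own proof read in the opposite direction: the paper starts from $\Psi\cT\theta_2\theta_1(F\tp G)$, pulls the trace through the Bochner integral, applies Fubini and the change of variables $t=r+s$, and then unwinds Lemma~\ref{trace and trace}, cyclicity of the trace, and Proposition~\ref{pedantic fusion} to arrive at $\Psi\cT(F)\Psi\cT(G)$, concluding by injectivity of $\Psi\cT$ exactly as you do. The ingredients, the null-set handling of $\RR\setminus\cD$, and the justification for interchanging the trace pairing with the Bochner integral all match.
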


\begin{proof}
Let $F,G\in L^1(\Real,\STN_1)$ and put $K = \theta_1(F\tp G) \in L^1(\RR,\STN_1)$.
Since
$\theta_2(K)(t) = \int_{\Real} K(p,t-p)  \,dp$\/,
we have 
\[ \Tr\left[ \theta_2(K)(t) \pi_t(x)^* \right] = \int_{\Real} \Tr\left[ K(p,t-p)\pi_t(\bx)^* \right]\,dp \]
and so, using Fubini's theorem (scalar-valued case),
\[ \begin{aligned}
\Psi\cT\theta_2(K)(\bx)
& = \int_{\Real} \left( \int_{\Real} \Tr
\left[ K(p,t-p)\pi_t(\bx)^* \right]\,dp \right)  \,dt \\
& = \int_{\RR}  \Tr
\left[ K(r,s)\pi_{r+s}(\bx)^* \right]\,d(r,s) \,.
\end{aligned} \]
For a.e.~$(r,s)\in\cD$, we have
\[ \begin{aligned}
& \phantom{=}    \Tr\left[ K(r,s)\pi_{r+s}(\bx)^* \right] \\
& =  \Tr \left[ (\pi_{r+s}(\bx)^*\tp I) W_{r,s}(F(r)\tp G(s))W_{r,s}^*\right]  & \quad\text{(by Eq.\ \eqref{eq:smeared} and Lem.\ \ref{trace and trace})} \\
& =  \Tr \left[ W_{r,s}^*(\pi_{r+s}(\bx)^*\tp I) W_{r,s}(F(r)\tp G(s))\right]  \\
& =  \Tr \left[ (\pi_r(\bx)^*\tp \pi_s(\bx)^*)(F(r)\tp G(s))\right] & \quad\text{(by Prop.\ \ref{pedantic fusion})} \\
& =  \Tr [ F(r)\pi_r(\bx)^*] \Tr [G(s)\pi_s(\bx)^* ]\,.
\end{aligned} \]
Therefore,
\[
 \begin{aligned}
\Psi\cT\theta_2(K)(\bx)
& = \int_{\RR}    \Tr [ F(r)\pi_r(\bx)^*] \Tr [G(s)\pi_s(\bx)^* ] \,d(r,s) \\
&  = \Psi\cT(F)(\bx) \Psi\cT(G)(\bx) \\
& = \Psi\cT(F\opconv G)(\bx)\quad\text{for all $\bx\in\bbH$},
\end{aligned}
\]
so that $F\opconv G = \theta_2(K)=\theta_2\theta_1(F\tp G)$, as required.
\end{proof}

\begin{cor}\label{key inequality}
$\norm{F\opconv G}_{\LIRS} \leq \norm{F}_{\LRS} \norm{G}_{\LIRS}$ for all $F,G\in\LRS$.
\end{cor}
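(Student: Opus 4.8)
The plan is to read the estimate off directly from the explicit formula \eqref{eq:explicit} for $\opconv$ supplied by Theorem~\ref{t:explicit}, by fixing $t$ and bounding the $\STN_1$-norm of the Bochner integral pointwise in $t$. Morally the whole thing is the operator-valued analogue of the classical Young inequality $\norm{a*b}_\infty\le\norm{a}_1\norm{b}_\infty$ for scalar convolution, with the unitary conjugation and the slicing map $I\tp\Tr$ contributing only norm-preserving or norm-nonincreasing factors.

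First I would fix $t\in\Real$ and apply the standard triangle inequality for the Bochner integral to \eqref{eq:explicit}, which gives
\[ \norm{(F\opconv G)(t)}_1 \le \int_\Real \norm{(I\tp\Tr)[W_{r,t-r}(F(r)\tp G(t-r))W_{r,t-r}^*]}_1\,dr. \]
Then I would simplify the integrand using three facts: by Lemma~\ref{trace and trace} the slicing map $I\tp\Tr$ is contractive on $\STN_1$; since $W_{r,t-r}$ is unitary, conjugation by it preserves the trace-class norm; and the trace-class norm is multiplicative on elementary tensors, so $\norm{F(r)\tp G(t-r)}_1=\norm{F(r)}_1\,\norm{G(t-r)}_1$. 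Combining these bounds the integrand by $\norm{F(r)}_1\,\norm{G(t-r)}_1$, so that
\[ \norm{(F\opconv G)(t)}_1 \le \int_\Real \norm{F(r)}_1\,\norm{G(t-r)}_1\,dr. \]

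To conclude, I would bound $\norm{G(t-r)}_1\le\norm{G}_{\LIRS}$, which holds for almost every $r$ (with $t$ fixed), since $s\mapsto\norm{G(s)}_1$ is dominated by $\norm{G}_{\LIRS}$ off a null set and the substitution $r\mapsto t-r$ preserves null sets. Pulling the constant out leaves $\int_\Real\norm{F(r)}_1\,dr=\norm{F}_{\LRS}$, hence $\norm{(F\opconv G)(t)}_1\le\norm{F}_{\LRS}\,\norm{G}_{\LIRS}$ for almost every $t$; taking the essential supremum over $t$ yields the claim. If $\norm{G}_{\LIRS}=\infty$ the asserted inequality is trivial, so there is no loss in assuming it finite.

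There is no serious obstacle here once the explicit description in Theorem~\ref{t:explicit} is in hand; the corollary is essentially the statement that the building block $\theta_1$ contributes the unitary-invariant, multiplicative tensor norm and $\theta_2$ contributes the convolution structure. The only points needing a moment's care are the almost-everywhere bookkeeping under the change of variable $r\mapsto t-r$ and the harmless reduction to the case $\norm{G}_{\LIRS}<\infty$. I would emphasize that this is a genuinely different estimate from the $L^1$-contractivity of $\theta_1$ and $\theta_2$ established above: here we keep one factor in the supremum norm and only integrate the other, which is exactly what is needed to feed claim~\ref{li:mnorm-is-dualconv} in the duality argument of Section~\ref{s:define derivation}.
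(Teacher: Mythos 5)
Your proof is correct and follows essentially the same route as the paper: the paper also bounds $\norm{\theta_1(F\tp G)(r,s)}_1\leq\norm{F(r)}_1\norm{G(s)}_1$ (via the contractivity of $I\tp\Tr$, unitary invariance, and multiplicativity of the trace norm on elementary tensors) and then applies the pointwise bound \eqref{eq:bloody house party} for $\theta_2$ together with the scalar Young-type estimate $\norm{a*b}_\infty\leq\norm{a}_1\norm{b}_\infty$. The only cosmetic difference is that the paper phrases the argument through the named maps $\theta_1$ and $\theta_2$ rather than directly from \eqref{eq:explicit}.
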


\begin{proof}
Let $K=\theta_1(F\tp G)\in L^1(\RR,\STN_1)$. Then
$\norm{K(r,s)}_1\leq \norm{F(r)}_1\norm{G(s)}_1$ for a.e.\ $(r,s)\in\Real^2$. So, using the norm bound \eqref{eq:bloody house party}, we have
\[
\norm{\theta_2(K)(t)}_1
 \leq \int_{\Real} \norm{F(p)}_1\norm{G(t-p)}_1  \,dp 
 \leq \norm{F}_{\LRS} \norm{G}_{\LIRS} \quad\text{for a.e.\ $t\in\Real$}.
\]
Hence $\norm{F\opconv G}_{\LIRS} = \norm{\theta_2(K)}_{\LIRS} \leq \norm{F}_{\LRS}\norm{G}_{\LIRS}$ as required.
\end{proof}

It would be interesting to study similar concrete realizations of ``dual convolution'' for other Type~I groups where we know the fusion rules explicitly, in particular for the Euclidean motion group~$\Euc(2)$.
 Similar arguments to those used for $\FAH$ yield $\FA(\Euc(2))\iso L^1(\Real_+,\cS_1(L^2(\mathbb T)))$ (see \cite[Chapter~4]{Sug_book_ed2}),
 but the ``dual convolution'' is then governed by a certain hypergroup structure on $\Real_+$, rather than the group structure on $\Real$ which governs dual convolution for~$\FAH$.  We leave a more detailed look at this case for future work.

Let us finish this section by completing the proof of Theorem~\ref{t:W-is-a-module-norm}.

\begin{proof}[Proof of Theorem \ref{t:W-is-a-module-norm}]
It suffices to prove the inequality \eqref{eq:W-module-norm} -- the other statements in the theorem follow easily.
Thus, fix $f\in\FAH$ and $h\in \sW_0$ (so $h$ and $fh$ are integrable). To prove that $\wnorm{hf}\leq \wnorm{h}\norm{f}_{\FAH}$, we follow the idea outlined at the end of Section~\ref{s:define derivation}, but for technical convenience we work on the Fourier side rather than the group side.

Let $G:\Real\to \STN_1$ be a measurable function with compact support, taking values a.e.~in the unit ball of $\STN_1$. In particular, $\norm{G}_{\LIRS}\leq 1$ and $\Psi\cT(G)\in \FA(\bbH)$.
Using the ``adjoint relation'' \eqref{eq:adjoint},
\begin{equation}\label{eq:control-hf}
 \int_{\bbH} \check{h}(\bx) \check{f}(\bx) \Psi\cT(G)(\bx) \,d\bx = \int_{\Real^*} \Tr(\pi_t(hf)G(t))\,dt \,.
\end{equation}
On the other hand, since $\check{f}\Psi\cT(G)\in\FA(\bbH)$ it equals $\Psi\cT(F)$ for some unique $F\in \LRS$,
and applying \eqref{eq:adjoint} again gives
\begin{equation}\label{eq:use-fg}  \int_{\bbH} \check{h}(\bx) \Psi\cT(F)(\bx) \,d\bx = \int_{\Real^*} \Tr(\pi_t(h)F(t))\,dt \,.
\end{equation}
As $F =(\Psi\cT)^{-1}(\check{f}) \opconv G$, using Corollary~\ref{key inequality} and the fact $\Psi\cT$ is an isometry gives
\[ \norm{F}_{\LIRS} \leq \norm{\check{f}}_{\FAH}  = \norm{f}_{\FAH} \,. \]
Combining this with \eqref{eq:control-hf} and \eqref{eq:use-fg} yields
\begin{equation}\label{eq:almost}
\begin{aligned}
\left\vert \int_{\Real^*} \Tr(\pi_t(hf)G(t))\,dt  \right\vert
& = \left\vert \int_{\Real^*} \Tr(\pi_t(h)F(t))\,dt \right\vert \\
& \leq \norm{\pi_\bullet(h)}_{\LRB} \norm{F}_{\LIRS} \\
& \leq \norm{\pi_\bullet(h)}_{\LRB}\norm{f}_{\FAH}   \,.
\end{aligned}
\end{equation}
Let $S$ denote the supremum on the left hand side of \eqref{eq:almost} over all such $G$. Since $(\STN_1)^*=\sB$ isometrically, and since $t\mapsto \pi_t(fh)$ is Bochner integrable (see Section~\ref{ss:vector valued}), a straightforward approximation argument with simple functions yields $S=\norm{\pi_\bullet(hf)}_{\LRB}$.
We conclude that $\wnorm{hf}\leq \wnorm{h}\norm{f}_{\FAH}$ as required.
\end{proof}

\begin{rem}[Remarks on the proof]\label{r:excuses}\
\begin{newnum}
\item The idea guiding our duality argument is, of course, that $\LIRS$ is isometric to a subspace of $L^1(\Real,\sB)^*$, and that the intersection of the unit ball of $\LIRS$ with $\LRS$ is a norming subset for this pairing. However, we wished to avoid technical discussions about the Radon--Nikodym property and duality for vector-valued $L^p$-spaces; cf.~Theorem~1 in \cite[Chapter~4]{DiestelUhl_book}.
\item One might wish to bypass the use of the ``adjoint relation'' and have a direct proof that $\norm{F\opconv G}_{\LRB}\leq \norm{F}_{\LRS} \norm{G}_{\LRB}$. The natural attempt is to consider\hfill\break
$\int_{\Real} \Tr((F\opconv G)(t)A_t)\,dt$
where $A_\bullet\in \LIRS$. Now we have
\[ \Tr((F\opconv G)(t)A_t) = \int_{\Real} \Tr\left((A_t\tp I)W_{r,t-r}(F(r)\tp G(t-r))W_{r,t-r}^*\right)\,dr \qquad\text{(a.e.~$t\in\Real$)}; \]
but trying to get upper bounds on the right-hand side with crude tools is problematic, since $A_t\tp I$ is not trace class and since conjugation with  $W_{r,t-r}$ will not preserve $\STN_1\ptp \sB$.
\end{newnum}
\end{rem}

\begin{rem}[Fourier coefficients of pointwise products]
Following a suggestion of the referee, we note that Theorem~\ref{t:explicit} may be viewed as an expression for the Fourier coefficients of the product of two functions in $(\FA\cap L^1)(\bbH)$, in terms of a twisted convolution of their Fourier series. This works as follows: let $f_1, f_2\in\FAH\cap L^1(\bbH)$; then by Theorem~\ref{t:F-inversion-for-AH}, $|t| \pi_t(f_i)\in\STN_1$ for a.e.~$t\in\Real$ and the functions $F_i(t)=|t|\pi_t(f_i)$ belong to $\LRS$, for $i=1,2$. Since $\Psi\cT(F_1\opconv F_2)=\Psi\cT(F_1)\Psi\cT(F_2)=f_1f_2$, another application of Theorem~\ref{t:F-inversion-for-AH} gives
\begin{equation}\label{eq:referee-prefers}
|t| \pi_t(f_1f_2) = (F_1\opconv F_2)(t) = \int_{\Real} (I\tp\Tr)[W_{r,t-r}(|r|\pi_r(f_1)\tp |t-r|\pi_{t-r}(f_2))W_{r,t-r}^*]\, dr\,.
\end{equation}
Using suitable regularization arguments, one can show that Equation~\eqref{eq:referee-prefers} remains valid for more general $f_1$ and $f_2$\/, but we shall not pursue this topic here.
\end{rem}

\end{section}

\begin{section}{Extending our result to other Lie groups}\label{s:moreLie}

We recall some definitions. Our indexing conventions are those of~\cite{HN_LieBook}.
\begin{dfn}
Let $\fg$ be a non-zero Lie algebra.
The \dt{lower central series} of $\fg$ is the decreasing sequence of ideals
\[ \fg = C_1 \supseteq C_2 \supseteq \dots \]
where $C_{j+1}\defeq [\fg, C_j]$ for each $j\geq 1$. We say $\fg$ is \dt{nilpotent} if $C_{n+1}=\{0\}$ for some~$n$ (note that this forces $C_{j+1}$ to be a \emph{proper} subset of $C_j$ for each $0\leq j \leq n$). The least such $n$ is called the \dt{nilpotency degree} of $\fg$; if $\fg$ has nilpotency degree $d$, we say that $\fg$ is \dt{$d$-step nilpotent.}
\end{dfn}

Our first task in this section is to prove Theorem~\ref{t:nilpotent cases}, so let us remind ourselves what it says.

\para{\thf Theorem \ref{t:nilpotent cases} {\rm (reprise)}}
Let $G$ be a $1$-connected, nilpotent, non-abelian Lie group. Then $\FA(G)$ is not weakly amenable.

\medskip

Define $\fh_3$ to be the real Lie algebra spanned as a real vector space by elements $x$, $y$ and $[x,y]$ satisfying the relations $[x,[x,y]]=[y,[x,y]]=0$. It is the Lie algebra of the group $\bbH$.

\begin{lem}\label{l:copy of heisenberg}
If $\fg$ is non-abelian and nilpotent, it contains a copy of~$\fh_3$.
\end{lem}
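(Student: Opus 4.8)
The plan is to produce two elements of $\fg$ whose iterated brackets reproduce the defining relations of $\fh_3$, then check that the span of those brackets is genuinely $3$-dimensional so that we really get a copy of $\fh_3$ rather than a degenerate quotient. Since $\fg$ is nilpotent with nilpotency degree $d\geq 2$ (degree $1$ would force $\fg$ abelian), the lower central series $\fg=C_1\supseteq C_2\supseteq\dots$ has $C_d\neq\{0\}$ but $C_{d+1}=[\fg,C_d]=\{0\}$. So I would first locate a nonzero element $w\in C_d$; by definition $C_d$ is spanned by $d$-fold brackets, so there exist $a_1,\dots,a_d\in\fg$ with $[a_1,[a_2,[\dots,[a_{d-1},a_d]\dots]]]\neq 0$.

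The idea is then to descend to the ``top two levels'' of this bracket. Set $v\defeq [a_2,[\dots,[a_{d-1},a_d]\dots]]\in C_{d-1}$, so that $[a_1,v]\neq 0$; put $x\defeq a_1$ and $y\defeq v$ and let $z\defeq[x,y]=[a_1,v]\neq 0$. The crucial point is that $z\in C_d$, hence $[x,z]$ and $[y,z]$ both lie in $[\fg,C_d]=C_{d+1}=\{0\}$. This is exactly the relation $[x,[x,y]]=[y,[x,y]]=0$ defining $\fh_3$. So the linear map sending the abstract generators $x,y,[x,y]$ of $\fh_3$ to $x,y,z\in\fg$ respects the bracket relations and gives a Lie algebra homomorphism $\fh_3\to\fg$.

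The remaining step, and the one I expect to be the only real subtlety, is to verify that this homomorphism is injective, i.e. that $x$, $y$, $z$ are linearly independent in $\fg$, so the image is a genuine subalgebra isomorphic to $\fh_3$ rather than a proper quotient. Since $z=[x,y]\neq 0$ and $z$ is central in the subalgebra generated by $x,y$, one checks that $z$ cannot lie in $\lin\{x,y\}$: if it did, then $z=\alpha x+\beta y$ would give $0=[x,z]=\beta[x,y]=\beta z$ and $0=[y,z]=\alpha[y,x]=-\alpha z$, forcing $\alpha=\beta=0$ and hence $z=0$, a contradiction. It remains only to rule out a linear dependence among $x$ and $y$ themselves; but if $y\in\lin\{x\}$ (or vice versa) then $[x,y]=0=z$, again contradicting $z\neq 0$, so $x,y$ are independent and, together with the previous observation, $\{x,y,z\}$ is linearly independent. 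Thus $\lin\{x,y,z\}$ is a three-dimensional subalgebra of $\fg$ isomorphic to $\fh_3$, which is the assertion of the lemma.
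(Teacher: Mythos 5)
Your proof is correct and follows essentially the same route as the paper: both arguments exploit the last two nonzero terms of the lower central series to find $x,y$ with $[x,y]\neq 0$ lying in the centre (you realize this via a nonzero $d$-fold bracket, the paper by picking a non-central element of $C^{n-1}$), whence the relations of $\fh_3$ hold. Your explicit verification that $x$, $y$, $[x,y]$ are linearly independent is a welcome extra detail that the paper leaves implicit.
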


\begin{proof}
Let $C^{n-1}\supset C^n\supset C^{n+1}=\{0\}$ be the last terms in the lower central series of $\fg$, so that $C^n=Z(\fg)$. Pick $x\in C^{n-1}\setminus C^n$, and pick $y\in\fg$ such that $[x,y]\neq 0$. Then, since $[x,y] \in Z(\fg)$, we see that $\Real x+\Real y +\Real[x,y]$ is a Lie subalgebra of $\fg$ which is isomorphic to~$\fh_3$\/.
\end{proof}


\begin{proof}[Proof of Theorem \ref{t:nilpotent cases}]
By Proposition~\ref{p:WA-hered}, it suffices to prove that $G$ contains a closed subgroup isomorphic to~$\bbH$.
This may well be folklore for Lie theorists, but we give the details for the reader's convenience.

Let $\fg$ be the Lie algebra of $G$; by Lemma~\ref{l:copy of heisenberg} there is a subalgebra $\fh\subseteq \fg$ which is isomorphic as a Lie algebra to $\fh_3$. Now since $G$ is a $1$-connected nilpotent Lie group, the exponential map $\exp_{\fg}$ of the Lie algebra $\fg$ maps $\fg$ \emph{diffeomorphically} onto $G$ (see, e.g.~Theorem 11.2.10 in \cite{HN_LieBook}), and therefore the image of $\fh$ under $\exp_{\fg}$  is a \emph{closed}, $1$-connected subgroup $H\subseteq G$. It remains to note that since $H$ and $\bbH$ are both $1$-connected, and their respective Lie algebras $\fh$ and $\fh_3$ are isomorphic, the two groups are isomorphic as Lie groups.
\end{proof}

It would be highly desirable to remove the condition of simple-connectedness, but we have been unable to do this.

Finally, we make some brief comments on the solvable cases.
(Recall from \cite[Theorem~5.5]{CG_WAAG1} that if $G$ is a $1$-connected, simply connected Lie group which is not solvable, then $\FA(G)$ is not weakly amenable.)
We start by quoting without proof a result from the theory of Lie algebras.

\begin{lem}[{\cite[Corollary 5.4.12]{HN_LieBook}}]
Let $\fg$ be a finite-dimensional, solvable Lie algebra. Then the commutator ideal $[\fg,\fg]$ is nilpotent.
\end{lem}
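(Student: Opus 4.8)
The plan is to reduce to an algebraically closed ground field and then apply Lie's theorem and Engel's theorem in sequence; since this is a standard structural fact, I would simply assemble the classical argument.

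First I would complexify. Writing $\fg_{\Cplx}\defeq \fg\otimes_{\Real}\Cplx$, one notes that solvability passes to $\fg_{\Cplx}$, that $[\fg_{\Cplx},\fg_{\Cplx}] = [\fg,\fg]\otimes_{\Real}\Cplx$, and that the lower central series is computed by the same brackets before and after tensoring with $\Cplx$. Hence $[\fg,\fg]$ is nilpotent if and only if its complexification is, and it suffices to prove the statement when the ground field is $\Cplx$.

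Next I would apply Lie's theorem to the adjoint representation $\operatorname{ad}\colon\fg\to\operatorname{End}(\fg)$. Since $\fg$ is solvable over the algebraically closed field $\Cplx$ of characteristic zero, there is a basis of $\fg$ relative to which every operator $\operatorname{ad} X$ (for $X\in\fg$) is upper triangular. For $X,Y\in\fg$ the operator $\operatorname{ad}[X,Y]=[\operatorname{ad} X,\operatorname{ad} Y]$ is then a commutator of upper-triangular matrices, hence strictly upper triangular and in particular nilpotent as a linear operator. By linearity, $\operatorname{ad} X$ is a nilpotent operator for every $X\in[\fg,\fg]$.

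Finally I would invoke Engel's theorem. As $[\fg,\fg]$ is an ideal, $\operatorname{ad} X$ preserves $[\fg,\fg]$ for each $X\in[\fg,\fg]$, and the restriction of $\operatorname{ad} X$ to this invariant subspace---which is precisely the adjoint action of the Lie algebra $[\fg,\fg]$ on itself---is again nilpotent. Thus every element of $[\fg,\fg]$ acts ad-nilpotently, and Engel's theorem yields that $[\fg,\fg]$ is nilpotent. The only genuinely delicate point is the initial reduction to $\Cplx$, since Lie's theorem fails over $\Real$; but the descent of nilpotency from $[\fg_{\Cplx},\fg_{\Cplx}]$ back to $[\fg,\fg]$ is immediate from the fact that the lower central series is unchanged by extension of scalars, so I expect no real obstacle once the complexification bookkeeping is in place.
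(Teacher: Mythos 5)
Your argument is correct. Note that the paper does not prove this lemma at all --- it is quoted without proof, with a citation to Hilgert--Neeb, so there is no in-paper argument to compare against; what you have written is the standard classical proof (complexify, apply Lie's theorem to $\operatorname{ad}\colon\fg_{\Cplx}\to\operatorname{End}(\fg_{\Cplx})$ to make every $\operatorname{ad}X$ upper triangular, conclude that $\operatorname{ad}Z$ is strictly upper triangular for $Z\in[\fg_{\Cplx},\fg_{\Cplx}]$, then finish with Engel), and all the steps check out. One phrase worth tightening: ``by linearity, $\operatorname{ad}X$ is nilpotent for every $X\in[\fg,\fg]$'' is slightly loose, since nilpotency of operators is not preserved under linear combinations in general --- the correct justification, which your argument already contains implicitly, is that strict upper triangularity \emph{in the single fixed basis supplied by Lie's theorem} is preserved under linear combinations, and $[\fg_{\Cplx},\fg_{\Cplx}]$ is spanned by brackets. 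The reduction to $\Cplx$ is also handled correctly: the lower central series commutes with extension of scalars, so nilpotency descends.
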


If $G$ is a $1$-connected Lie group with Lie algebra $\fg$, and $\fk$ is an ideal in $\fg$, then the subgroup of $G$ corresponding to $\fk$ is closed and has $\fk$ as its Lie algebra. Moreover, in this setting $[\fg,\fg]$ is the Lie algebra of the derived subgroup $[G,G]$.
Combined with Lemma~\ref{l:copy of heisenberg}, this leads to the following result, no doubt well-known to specialists.

\begin{prop}
Let $G$ be a $1$-connected solvable Lie group, of solvable length $\geq 3$. Then $G$ contains a closed subgroup isomorphic to $\bbH$.
\end{prop}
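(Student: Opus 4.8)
The plan is to locate a copy of the Heisenberg algebra $\fh_3$ inside the \emph{nilpotent} ideal $\fn\defeq[\fg,\fg]$, where $\fg$ is the Lie algebra of $G$, and then to exponentiate it \emph{inside the derived subgroup} $N\defeq[G,G]$, which by the Remark preceding this proposition is a closed connected subgroup of $G$ with Lie algebra $\fn$. The reason for working inside $N$ rather than $G$ is that $G$ itself is only solvable, so its exponential map need not be a diffeomorphism; but $N$ will turn out to be a $1$-connected \emph{nilpotent} group, and for such groups one may reuse verbatim the exponential-map argument from the proof of Theorem~\ref{t:nilpotent cases}.

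First the purely algebraic input. By the quoted result \cite[Corollary 5.4.12]{HN_LieBook}, $\fn=[\fg,\fg]$ is nilpotent. It is moreover non-abelian: the hypothesis that $G$ has solvable length $\geq 3$ means precisely that the second derived algebra $[[\fg,\fg],[\fg,\fg]]=[\fn,\fn]$ is non-zero, so $\fn$ is not abelian. Lemma~\ref{l:copy of heisenberg} therefore supplies a subalgebra $\fh\subseteq\fn$ with $\fh\iso\fh_3$. (This is also where the hypothesis on the solvable length is essential: for length $2$ the algebra $\fn$ is abelian and contains no copy of $\fh_3$, consistent with the fact that the $ax+b$ group contains no copy of $\bbH$.)

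The step that requires genuine care, and which I expect to be the main obstacle, is the verification that $N$ is $1$-connected, since only then does the exponential-map argument apply. $N$ is connected by construction, so it remains to check $\pi_1(N)=0$. Here I would use the fibration $N\hookrightarrow G\to G/N$ associated with the closed normal subgroup $N$. The quotient $G/N$ is a connected Lie group with abelian Lie algebra $\fg/\fn$, hence abelian; and its fundamental group is a quotient of $\pi_1(G)=0$ via the exact sequence $\pi_1(G)\to\pi_1(G/N)\to\pi_0(N)=0$, so $G/N$ is a $1$-connected abelian Lie group and is therefore diffeomorphic to some $\Real^m$, in particular contractible. Feeding contractibility of the base back into the homotopy exact sequence of the fibration yields $\pi_k(N)\iso\pi_k(G)$ for all $k\geq 1$, whence $\pi_1(N)\iso\pi_1(G)=0$. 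Thus $N$ is $1$-connected and nilpotent.

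Finally I would conclude exactly as in the proof of Theorem~\ref{t:nilpotent cases}. Since $N$ is $1$-connected nilpotent, $\exp_{\fn}\colon\fn\to N$ is a diffeomorphism (Theorem 11.2.10 in \cite{HN_LieBook}), so $H\defeq\exp_{\fn}(\fh)$ is a closed, $1$-connected subgroup of $N$ with Lie algebra $\fh\iso\fh_3$; as $H$ and $\bbH$ are both $1$-connected with isomorphic Lie algebras, they are isomorphic as Lie groups. Because $H$ is closed in $N$ and $N$ is closed in $G$, the subgroup $H$ is closed in $G$ and isomorphic to $\bbH$, as required. Everything here apart from the simple-connectedness of $N$ is either routine bookkeeping or a direct transcription of the nilpotent case already treated.
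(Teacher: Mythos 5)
Your proposal is correct and follows essentially the same route the paper intends: it invokes \cite[Corollary 5.4.12]{HN_LieBook} to get nilpotency of $[\fg,\fg]$, notes that solvable length $\geq 3$ makes this ideal non-abelian, applies Lemma~\ref{l:copy of heisenberg}, and then exponentiates inside the closed derived subgroup exactly as in the proof of Theorem~\ref{t:nilpotent cases}. The paper leaves the details to the reader (``no doubt well-known to specialists''), and your fibration argument for the $1$-connectedness of $[G,G]$ correctly supplies the one step that genuinely needs checking.
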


Therefore, for such $G$, $\FA(G)$ is not weakly amenable. Summing up, and appealing again to \cite[Theorem 5.5]{CG_WAAG1}, we can state the following result:

\begin{thm}
Let $G$ be a $1$-connected Lie group whose Fourier algebra is weakly amenable. Then either $G$ is abelian, or it is $2$-step solvable; and it contains no closed copy of $\bbH$ or the real $ax+b$ group.
\end{thm}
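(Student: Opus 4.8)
The plan is to derive this theorem purely as a corollary of the results already assembled in this section, together with the hereditary property of weak amenability recorded in Proposition~\ref{p:WA-hered}. The guiding observation is that weak amenability of $\FA(G)$ forces $\FA(H)$ to be weakly amenable for every closed subgroup $H\leq G$; equivalently, if some closed subgroup of $G$ has a non-weakly-amenable Fourier algebra, then $\FA(G)$ itself cannot be weakly amenable. So throughout I assume $G$ is $1$-connected with $\FA(G)$ weakly amenable, and extract the two stated conclusions.

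First I would handle the two ``forbidden subgroup'' statements simultaneously. By Theorem~\ref{t:mainthm}, $\FA(\bbH)$ is not weakly amenable, and by the main result of \cite{CG_WAAG1} neither is the Fourier algebra of the real $ax+b$ group. Applying Proposition~\ref{p:WA-hered} in contrapositive form, a closed copy of either group inside $G$ would make $\FA(G)$ fail to be weakly amenable, contradicting the hypothesis. Hence $G$ contains no closed copy of $\bbH$ and no closed copy of the real $ax+b$ group.

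Next I would pin down the algebraic structure of $G$. Since $\FA(G)$ is weakly amenable, $G$ must be solvable: were it not, \cite[Theorem~5.5]{CG_WAAG1} would give that $\FA(G)$ is not weakly amenable. Now invoke the Proposition established just above: a $1$-connected solvable Lie group of solvable length $\geq 3$ contains a closed copy of $\bbH$, and hence, by Theorem~\ref{t:mainthm} and Proposition~\ref{p:WA-hered}, has a non-weakly-amenable Fourier algebra. Consequently our $G$ has solvable length at most $2$. Solvable length $\leq 1$ is exactly the abelian case, while solvable length $2$ means precisely that $[G,G]$ is abelian and nontrivial, i.e.\ that $G$ is $2$-step solvable. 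This yields the dichotomy ``abelian or $2$-step solvable'' and completes the argument.

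Since every genuinely hard step has already been carried out --- the non-weak-amenability of $\FA(\bbH)$ (Theorem~\ref{t:mainthm}) and of the $ax+b$ algebra, the non-solvable case from \cite{CG_WAAG1}, and the embedding result that solvable groups of length $\geq 3$ contain a closed copy of $\bbH$ --- I do not expect any real obstacle: the argument is essentially bookkeeping routed through Proposition~\ref{p:WA-hered}. The only points requiring mild care are terminological and hypothesis-matching: confirming that ``solvable length $\leq 2$'' translates exactly into the stated alternative ``abelian or $2$-step solvable'', and verifying that the subgroups at issue are genuinely \emph{closed}, so that Proposition~\ref{p:WA-hered} is indeed applicable.
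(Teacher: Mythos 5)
Your proposal is correct and follows essentially the same route as the paper, which presents this theorem as a direct summing-up of Theorem~\ref{t:mainthm}, the $ax+b$ result and Theorem~5.5 of \cite{CG_WAAG1}, the proposition that $1$-connected solvable groups of length $\geq 3$ contain a closed copy of $\bbH$, and the hereditary property in Proposition~\ref{p:WA-hered}. Nothing further is needed.
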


These observations suggest the following question,  which is left for future work.

\para{Question 1}
Let $\Euc(2)= \Real^2 \rtimes \SO(2)$ be the Euclidean motion group, and let $\widetilde{\Euc}(2)$ be its universal cover. Are either $\FA(\Euc(2))$ or $\FA(\widetilde{\Euc}(2))$ weakly amenable?

\bigskip
We close with a question motivated by general concepts in the study of derivations on Banach algebras.
By standard arguments, once we have shown $D:\FAH\to\sW$ is a non-zero continuous derivation,
we can now obtain non-zero continuous derivations from $\FAH$ to its dual. To be specific: pick any $\psi\in \sW^*$, and define $D_\psi: \FAH\to\FAH^*$ by
$D_\psi(f)(g) \defeq \psi( D(f)\cdot g)$ for each $f,g\in\FAH$.
It is easily checked that $D_\psi$ is a continuous derivation; and since $D$ is not identically zero, it is clear that we can find $\psi\in\sW$ such that $D_\psi$ is not identically zero.

However, unlike the derivations that have been constructed on all previous examples --- that is, on Fourier algebras $\FA(G)$ where $G$ is either compact or one of the groups from \cite{CG_WAAG1} --- our derivations $D_\psi$ are in general not \emph{cyclic derivations}. (See the early sections of \cite{CG_WAAG1} for a discussion of cyclic derivations and cyclic amenability for commutative Banach algebras.)

\para{Question 2}
Is $\FAH$ cyclically amenable? Equivalently: does there exist a non-zero continuous derivation $T:\FAH\to\FAH^*$ which satisfies $T(a)(b)+T(b)(a)=0$ for all $a,b\in\FAH$?

\end{section}



\bibliographystyle{siam}
\bibliography{WAAHbib}

\vfill

\para{Affiliations}\

\noindent
Y. Choi\\
Department of Mathematics and Statistics\\
Fylde College, Lancaster University\\
Lancaster, United Kingdom LA1 4YF

\noindent
Email: \texttt{y.choi1@lancaster.ac.uk}

\bigskip
\noindent
M. Ghandehari\\
Department of Pure Mathematics\\
University of Waterloo\\
200 University Avenue West\\
Waterloo (ON), Canada N2L 3G1

\noindent
Email: \texttt{mghandehari@uwaterloo.ca}

\end{document}